\definecolor{mypink1}{rgb}{0.858, 0.188, 0.478}
\definecolor{mypink2}{RGB}{219, 48, 122}
\definecolor{mypink3}{cmyk}{0, 0.7808, 0.4429, 0.1412}
\definecolor{mygray}{gray}{0.6}
\definecolor{venetianred}{rgb}{0.78, 0.03, 0.08}
\definecolor{sapphire}{rgb}{0.03, 0.15, 0.4}
\definecolor{utahcrimson}{rgb}{0.83, 0.0, 0.25}
\definecolor{trueblue}{rgb}{0.0, 0.45, 0.81}
\definecolor{carminered}{rgb}{1.0, 0.0, 0.22}
\definecolor{cobalt}{rgb}{0.0, 0.28, 0.67}
\definecolor{cornflowerblue}{rgb}{0.39, 0.58, 0.93}
\definecolor{darkmagenta}{rgb}{0.55, 0.0, 0.55}
\definecolor{electricultramarine}{rgb}{0.25, 0.0, 1.0}
\definecolor{falured}{rgb}{0.5, 0.09, 0.09}
\definecolor{hancornflowerblue}{rgb}{0.32, 0.09, 0.98}
\definecolor{mahogany}{rgb}{0.75, 0.25, 0.0}
\definecolor{oucrimsonred}{rgb}{0.6, 0.0, 0.0}
\definecolor{persianblue}{rgb}{0.11, 0.22, 0.73}
\definecolor{rufous}{rgb}{0.66, 0.11, 0.03}
\definecolor{uablue}{rgb}{0.0, 0.2, 0.67}
\definecolor{zaffre}{rgb}{0.0, 0.08, 0.66}
\definecolor{carmine}{rgb}{0.59, 0.0, 0.09}
\newtheorem{thm}{Theorem}[section]
\newtheorem{lem}{Lemma}[section]
\newtheorem{prop}{Proposition}[section]
\newtheorem{defi}{Definition}[section]
\newtheorem{rem}{Remark}[section]
\numberwithin{equation}{section}
\newcommand{\R}{\mathbb{R}}
\newcommand{\sQ}{\mathcal{Q}}
\newcommand{\ba}{\mathbf a}
\newcommand{\baa}{\mathbf A}
\newcommand{\ck}{\mathcal{K}_{\lambda}}
\definecolor{lime}{HTML}{A6CE39}
\DeclareRobustCommand{\orcidicon}{
	\begin{tikzpicture}
	\draw[lime, fill=lime] (0,0) 
	circle [radius=0.16] 
	node[white] {{\fontfamily{qag}\selectfont \tiny ID}};
	\draw[white, fill=white] (-0.0625,0.095) 
	circle [radius=0.007];
	\end{tikzpicture}
	\hspace{-2mm}
}
\author[M. Hamouda, M. Majdoub \& T. Saanouni]{Makram Hamouda\orcidA{}, Mohamed Majdoub\orcidB{} \& Tarek Saanouni\orcidC{}}
\address[M. Hamouda]{Department of Basic Sciences, Deanship of Preparatory Year and Supporting Studies
Imam Abdulrahman Bin Faisal University, P. O. Box 1982, Dammam, Saudi Arabia}
\email{\sl \color{blue}{mmhamouda@iau.edu.sa}}
\email{\sl \color{blue}{mahamoud@iu.edu}}
\address[M. Majdoub]{Department of Mathematics, College of Science, Imam Abdulrahman Bin Faisal University, P. O. Box 1982, Dammam, Saudi Arabia.}
\address[M. Majdoub]{Basic and Applied Scientific Research Center, Imam Abdulrahman Bin Faisal University, P.O. Box 1982, 31441, Dammam, Saudi Arabia.}
\email{\sl \color{blue}{mmajdoub@iau.edu.sa}}
\email{\sl \color{blue}{mohamed.majdoub@fst.rnu.tn}}
\email{\sl \color{blue}{med.majdoub@gmail.com}}
\address[T. Saanouni]{Department of Mathematics, College of Science and Arts in Uglat Asugour, Qassim University, Buraydah, Kingdom of Saudi Arabia.}
\email{\sl \color{blue}{t.saanouni@qu.edu.sa}}
\title[Global existence  $\&$ Scattering]{Damping Effects on Global Existence and Scattering for an Inhomogeneous NLS Equation with Inverse-Square Potential}
\begin{document}

\subjclass[2020]{35Q55, 35A01, 35B40, 35P25.}
\keywords{Inhomogeneous Schr\"odinger equation, Damped NLS equation, loss dissipation, inverse square potential, scattering.}

\begin{abstract}
This work explores the global existence and scattering behavior of solutions to a damped, inhomogeneous nonlinear Schr\"odinger equation featuring a time-dependent damping term, an inverse-square potential, and an inhomogeneous nonlinearity. We establish global well-posedness in the energy space for subcritical, mass-critical, and energy-critical regimes, using  Strichartz estimates, Hardy inequalities, and Gagliardo–Nirenberg-type estimates. For sufficiently large damping, we highlight how the interplay between damping, singular potentials, and inhomogeneity influences the dynamics. Our results extend existing studies and offer new insights into the long-time behavior of solutions in this more general setting. To the best of our knowledge, this is the first study to address the combined effects of inverse-square potential, inhomogeneous (or homogeneous) nonlinearity, and damping in the context of the NLS equation. 
\end{abstract}

\maketitle



\section{Introduction and main results}\label{intro}

We consider the Cauchy problem for the inhomogeneous Schr\"odinger (INLS) equation with inverse-square potential and an unsteady linear damping term, namely
\begin{equation}
\left\{
\begin{array}{ll}
{\rm i}\partial_t u-\ck\,u +{\rm i}\ba(t)u=\mu|x|^{-b}|u|^{p-1}u,\quad (t,x) \in \R\times\R^N,\vspace{.2cm}\\
u(0,x)=u_0(x),\quad x \in \R^N,
\label{DINLS}\tag{DINLS}
\end{array}
\right.
\end{equation}

where the space dimension is $N\geq3$, the inhomogeneous term exponent is $0<b<2$, the exponent of the source term is $p>1$, $\mu=\pm 1$ and the linear free operator is $\ck=-\Delta+\lambda|x|^{-2}$, where $\lambda>-\frac{(N-2)^2}{4}$.  The damping term is a time-dependent nonegative continuous function  $\ba:=\ba(t)\ge 0$. The focusing case corresponds to $\mu=-1$ and the defocusing one stands for $\mu=1$.

Based on the sharp Hardy inequality  \cite{abde}, 
\begin{equation}\label{Hardy}
\lambda_N\int_{\mathbb{R}^N}|x|^{-2} \, |f(x)|^2\,dx \leq \int_{\mathbb{R}^N}|\nabla f(x)|^2\,dx,
\end{equation}
where the optimal constant $\lambda_N$ is given by
\begin{equation}\label{lambda_N}
    \lambda_N = \frac{(N-2)^2}{4},
\end{equation}
it is known that the operator $-\Delta + \lambda|x|^{-2}$ admits a positive semi-definite symmetric extension $\ck$ for all $\lambda > -\lambda_N$. However, in the  range $-\lambda_N < \lambda < 1-\lambda_N$, this extension is not unique \cite{ksww,ect}. In such cases, one typically chooses the Friedrichs extension \cite{ksww,pst}, which is the canonical self-adjoint extension associated with the corresponding quadratic form:
\begin{equation}
    \label{quad-form}
    \left< \ck f, f\right>_{L^2}=\int_{\mathbb{R}^N}\bigg[|\nabla f(x)|^2+\lambda |x|^{-2} \, |f(x)|^2 \bigg]\,dx=\int_{\mathbb{R}^N}\Big|\nabla f(x)+\kappa |x|^{-2} \, xf(x) \Big|^2\,dx,
\end{equation}
for $\lambda > -\lambda_N$ and for any $f \in \mathcal{C}^{\infty}_c \left(\R^N \setminus \{0\}\right)$. In the above identity \eqref{quad-form}, we introduced the following parameter:
\begin{equation}
    \label{kappa}
    \kappa:=\sqrt{\lambda_N}-\sqrt{\lambda_N+\lambda}, \footnote{It is straightforward to see that 
$\kappa=\kappa(\lambda) : \left[-\lambda_N, \infty\right) \rightarrow \left(-\infty, \sqrt{\lambda_N}\,\right]$. }
\end{equation}
which is of essential use regarding the norms' equivalence. 

Note that  the operator $\mathcal K_{\lambda}$ is well-defined on the domain, 
\begin{equation*}
    H^1_{\lambda}(\R^N)=\{f \in H^1(\R^N): \|f\|_{H^1_{\lambda}} < \infty \},
\end{equation*}
where 
\begin{equation}
    \label{norm}
    \|f\|_{H^1_{\lambda}}^2=\|f\|_{L^2}^2+\|\nabla f\|_{L^2}^2+\int_{\mathbb{R}^N}\lambda |x|^{-2} \, |f(x)|^2\,dx = \|f\|_{L^2}^2+\|f\|_{\dot{H}^1_{\lambda}}^2.
\end{equation}
It is straightforward, by \eqref{Hardy}, to see that we have the equivalence of the norms as follows:
\begin{equation}
    \label{norm-equiv}
    \|f\|_{{H}^1_{\lambda}} \sim  \|f\|_{H^1}, \quad \text{for} \ \lambda > - \lambda_N.
\end{equation}

From the physical point of view, the equation \eqref{DINLS}  arises in diverse physical contexts ranging from quantum gases to nonlinear optics. Specifically, this model incorporates an inverse-square potential which arises in quantum three-body problems, a spatially modulated nonlinearity  that captures interaction inhomogeneities in trapped Bose-Einstein condensates, and a time-dependent damping term  modeling energy gain/loss dissipation. The interplay of these features yields a mathematically rich framework for analyzing global existence and scattering. For further details, see \cite{Belmonte, bpst,ksww,Kartashov} and the references therein.\\

 The mathematical analysis of \eqref{DINLS} presents a nontrivial synthesis of three fundamental phenomena: the critical scaling behavior induced by the inverse-square potential, the spatial modulation of nonlinear interactions, and the non-autonomous effects of damping. Each component individually raises delicate analytical questions concerning functional settings, dispersive properties, and long-time dynamics. The transition between these regimes reveals a subtle competition.\\

Equation \eqref{DINLS} with $\mathbf{a} = 0$ is invariant under the following scaling transformation:
\begin{equation*}
u_\delta(t, x) := \delta^{\frac{2 - b}{p - 1}} u(\delta^2 t, \delta x), \quad \delta > 0.
\end{equation*}

Associated with this scaling is the critical Sobolev exponent
\begin{equation}
\label{s-c}
s_c := \frac{N}{2} - \frac{2 - b}{p - 1},
\end{equation}
which leaves the homogeneous Sobolev norm invariant in the sense that
$$
\|u_\delta(t)\|_{\dot{H}^s} = \delta^{s - s_c} \|u(\delta^2 t)\|_{\dot{H}^s}.
$$

Two particular cases are of significant interest from a physical viewpoint. The first is the \textit{mass-critical} case, corresponding to $s_c = 0$, which occurs when $p =1 + \frac{4 - 2b}{N}$. This regime is associated with the conservation of mass. The second is the \textit{energy-critical} case, defined by $s_c = 1$, which arises when $p =1 + \frac{4 - 2b}{N - 2}$.\\

 In what follows, we provide an overview of the known results for both the undamped and damped cases.

In the undamped regime, where $\mathbf{a}(t) \equiv 0$ in equation~\eqref{DINLS}, extensive research has focused on several key special cases. The classical nonlinear Schrödinger (NLS) equation, arising when both $b = 0$ and $\lambda = 0$, has been studied in depth over the past three decades, with significant progress made on questions of well-posedness, soliton dynamics, and scattering theory~\cite{Caz-book,Tao-book,LP-book,Wang-book}. When $b = 0$ and $\lambda \neq 0$, the equation includes an inverse-square potential while retaining a homogeneous nonlinearity. This variant has attracted considerable attention in recent years~\cite{Dinh-18, HI-2025, Killip-17, K-Murphy-17, Lu-18, zz, Yang-21}. Another notable case arises when $\lambda = 0$ and $b \neq 0$, leading to an inhomogeneous NLS equation with spatially varying nonlinearity. This form has been widely investigated~\cite{An-21, Ardila-21, Campos-21, Dinh-18-NA, Guzman-17} due to its relevance in physical models such as nonlinear optics and Bose-Einstein condensates. The case $\lambda>-\lambda_N$  and $b>0$ has been examined in \cite{cg,Suzuki,sbts}.

More generally, the inhomogeneous nonlinear Schr\"{o}dinger equation with potential
\begin{equation*}\label{Gen-poten}
    i\partial_t u + \Delta u - Vu = \mu |x|^{-b}|u|^{p-1}u, 
\end{equation*}
has been the subject of active research. Dinh \cite{Dinh-21} established foundational results for this equation when $N=3$, $b>0$, and $\mu=\pm1$, considering potentials $V\in K_0\cap L^{3/2}$ that satisfy $\|V_-\|_{K} < 4\pi$, where the Kato norm $\|V\|_K := \sup_{x\in\mathbb{R}^3} \int_{\mathbb{R}^3} |V(y)||x-y|^{-1} dy$ characterizes the potential's regularity and $V_- := \min\{V,0\}$ represents its negative part. In the case of harmonic potentials, Luo \cite{Luo-19} investigated standing wave solutions for $V(x)=|x|^2$ with $\mu=-1$ and $b<0$, obtaining several stability and multiplicity results. \\

In the damped regime, we will now review the existing literature depending on the parameters $\lambda$ and $b$. 
The case $\lambda = b = 0$ has been extensively studied, with numerous works addressing local and global well-posedness, blow-up phenomena, and asymptotic behavior. The global versus blow-up has been widely studied in \cite{vdd, FZS14, HM-Damped NLS,ot3, Tsut1}. Regarding scattering, the constant damping case is investigated in \cite{vdd, inui}. For damping satisfying $t\ba(t)\sim \gamma>0$ as $t\to\infty$ and oscillating initial data, see \cite{Bamri,Tayachi-23}. Recently, energy scattering was obtained in \cite{MJM}.

However, to the best of our knowledge, the damped NLS with (non-trivial) potential is not studied in the literature.  In this paper, we are interested in the global existence and asymptotic behavior of \eqref{DINLS} in the general setting $\lambda \neq 0, b>0$ and $1<p \le 1+\frac{4-2b}{N-2}$.\\
 
Note that the local existence in $H^1$ to \eqref{DINLS}  is established in \cite{Suzuki} by energy methods under the assumptions that
\begin{equation}
    \label{assump-suz}
    N \ge 3, \quad 0<b<2, \quad 1<p<1+\frac{4-2b}{N-2} \quad \text{and} \quad \lambda > -\lambda_N.
    \end{equation}
However, it is not clear whether the local solution obtained by energy methods in \cite{Suzuki} belongs to the Strichartz spaces. The latter information is an essential tool in the long-time behavior analysis among other uses. In \cite{cg}, this additional property is proved assuming some conditions. In fact, the literature on the local well-posedness (LWP) theory in the energy space $H^1(\R^N)$ ($N\ge 3$) for the undamped NLS corresponding to \eqref{DINLS} ($\ba(t)=0$) can be summarized in Table \ref{tab_LWP}. 
\small{
\begin{table}[h]
    \centering
    \begin{tabular}{|c|c|c|c|}
        \hline
         $\lambda$ & $b$ & $p$& \bf{{Ref.}} \\
        \hline
       $\lambda \ge -\lambda_N$ & $0<b<2$ & $1<p<1+\frac{4-2b}{N-2}$ & \cite{Suzuki2012, Suzuki2012-bis, Suzuki} \\
        \hline
         $\lambda > -\lambda_N$ & $0\le b<1$ & $1+\frac{2-2b}{N}<p<1+\frac{2-2b}{N-2}$ & \cite{cg} \\
        \hline
        $\lambda > -\lambda_N +\left(\frac{(p-1)(N-2)-2+2b}{2p}\right)^2$& $0\le b<\min(N/2,2)$ & $1+\frac{2(1-b)_{+}}{N-2}<p<1+\frac{4-2b}{N-2}$ & \cite{cg} \\
        \hline
        $\lambda > -\frac{(N+2-2b)^2-4}{(N+2-2b)^2}\lambda_N$ & $0< b<4/N$ &$p=1+\frac{4-2b}{N-2}$ & \cite{Kim} \\
        \hline 
    \end{tabular}
    \vspace{.2cm}
    \caption{\small{LWP for \eqref{DINLS} with $\ba(t)=0$ in the energy space.}}
    \label{tab_LWP}
\end{table}}

It is shown in \cite{Suzuki2012, Suzuki2012-bis, Suzuki}, via the energy methods, the existence of a unique weak solution $u\in C([0,T); H^1)\cap C^1([0,T); H^{-1})$ to \eqref{DINLS} with $\ba(t)=0$. To the best of our knowledge, there is no available proof in the literature showing that the above-mentioned local solution belongs to any Strichartz space. Furthermore, the global existence is proved for $1<p<1+\frac{4-2b}{N-2}$ and $\mu=1$ or $1<p<1+\frac{4-2b}{N}$ and $\mu=-1$.

Using Strichartz estimates and a fixed-point argument, it is shown in \cite{cg,Kim} that \eqref{DINLS} with $\ba(t)=0$ has a unique maximal solution $u$ such that 
\begin{equation}
\label{u-space}
u\in C([0,T^*); H^1)\cap L^q_{loc}([0,T^*); W^{1,r}),
\end{equation}
for any admissible pair $(q,r)$ in the sense of \eqref{adm} below. Moreover, we have the following blow-up criterion:
\begin{equation}
\label{Blow-Crit}
\text{if} \quad T^*<\infty \quad \text{then} \quad\displaystyle\lim_{t\to T^*}\|\nabla u(t)\|_{L^2}=\infty. 
\end{equation}

Recent work in~\cite{cg} establishes sufficient conditions for global existence and blowup phenomena for $N \geq 3$, $\mu = -1$, and $1 + \frac{4 - 2b}{N} \leq p < 1 + \frac{4 - 2b}{N - 2}$, employing Gagliardo-Nirenberg-type estimates. This framework further yields small-data global well-posedness in the energy-subcritical case $p < 1 + \frac{4 - 2b}{N - 2}$ under some conditions on $b$ and $\lambda$, achieved through a synthesis of Strichartz estimates and a fixed-point argument. The analysis extends to scattering criteria and wave operator construction in $H^1$ for the intercritical regime. These developments complement prior investigations~\cite{cg, Suzuki, Suzuki2012, Suzuki2012-bis}, which systematically characterize local/global well-posedness, blowup dynamics, and scattering behavior in $H^1$ for $N \geq 3$, in the energy-subcritical case of \eqref{DINLS} with $\ba(t) = 0$.

In the energy-critical regime for \eqref{DINLS}, the local well-posedness as well as small data global well-posedness and scattering are proven in \cite{Kim}.

Using a change of unknowns $v(t,x)={\rm e}^{\baa(t)}u(t,x)$, where $\baa(t)$ is given by \eqref{A} below, and the fact that $t \mapsto {\rm e}^{(1-p)\baa(t)}$ is bounded, the local existence in the sense of \eqref{u-space}-\eqref{Blow-Crit} holds true for \eqref{DINLS-v} below, and consequently for the damped NLS \eqref{DINLS}.\\


\subsection{Main results} In this section, we present our main results on global existence and scattering for the equation \eqref{DINLS}. We consider only the focusing case, corresponding to $\mu=-1$. 

We begin with the following theorem which establishes global existence and scattering in both the mass-subcritical and the mass-critical settings.
\begin{thm}
\label{scat1}
Let $\lambda >-\lambda_N, \ 0 \le b <2$, $\displaystyle 1<p\leq 1+\frac{4-2b}{N}$, and $u_0 \in H^1(\R^N)$. Assume that $\underline{\ba}>0$.
\begin{itemize}
        \item[(a)] If $\displaystyle p< 1+\frac{4-2b}{N}$, then the solution $u$ to \eqref{DINLS}  exists globally and scatters in $H^1(\R^N)$.
        \item[(b)] If $\displaystyle p= 1+\frac{4-2b}{N}$ and \begin{equation}
            \label{Grd-ass}
            \|u_0\|<\left(\frac{N}{N+2-b}\right)^{\frac{N}{4-2b}}\|\sQ\|,
        \end{equation} then the solution $u$ to \eqref{DINLS}  exists globally and scatters in $H^1(\R^N)$. Here, $\sQ$ is the ground state given by \eqref{grd} below.
\end{itemize}
\end{thm}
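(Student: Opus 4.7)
I would introduce the renormalized unknown $v(t,x):=e^{\baa(t)}u(t,x)$, with $\baa(t):=\int_0^t\ba(s)\,ds$, which absorbs the damping and turns \eqref{DINLS} into the undamped inhomogeneous NLS with decaying coupling
\[
    i\partial_t v-\ck v=\mu\,\eta(t)\,|x|^{-b}|v|^{p-1}v,\qquad v(0,\cdot)=u_0,\qquad \eta(t):=e^{(1-p)\baa(t)}\le 1.
\]
Pairing with $\bar v$ and taking imaginary parts yields the conservation $\|v(t)\|_{L^2}=\|u_0\|_{L^2}$. For $\mu=-1$, differentiating in time the natural energy
\[
    E_v(t):=\tfrac12\langle\ck v,v\rangle-\tfrac{\eta(t)}{p+1}\!\int_{\R^N}|x|^{-b}|v|^{p+1}\,dx
\]
yields $\tfrac{d}{dt}E_v(t)=-\tfrac{\dot\eta(t)}{p+1}\int_{\R^N}|x|^{-b}|v|^{p+1}\,dx\ge 0$, so $E_v$ is not monotone; however its total variation is explicitly finite since $\int_0^\infty|\dot\eta|\,dt=\int_0^\infty(p-1)\ba\,e^{(1-p)\baa}\,dt=1$, and this will drive the whole analysis.

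Next I would combine this with the Gagliardo--Nirenberg-type inequality
\[
    \int_{\R^N}|x|^{-b}|f|^{p+1}\,dx\le C\,\|f\|_{\dot H^1_\lambda}^{\beta}\,\|f\|_{L^2}^{p+1-\beta},\qquad \beta:=\tfrac{N(p-1)+2b}{2},
\]
together with the conserved mass $\|v(t)\|_{L^2}=\|u_0\|_{L^2}$. In case~(a), mass-subcriticality forces $\beta<2$, so Young's inequality combined with a standard continuity/bootstrap argument absorbs the nonlinear contribution and produces a uniform bound $\sup_t\|v(t)\|_{H^1_\lambda}<\infty$ from the finite total variation of $E_v$. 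In case~(b), $\beta=2$; the sharp GN constant is expressible through $\|\sQ\|_{L^2}$ via Pohozaev identities applied to \eqref{grd}, and the hypothesis \eqref{Grd-ass} is precisely what keeps the bootstrap coefficient below the threshold needed for the argument to close, yielding again a uniform $H^1_\lambda$-bound. Undoing the gauge via $u(t)=e^{-\baa(t)}v(t)$, the norm equivalence \eqref{norm-equiv} gives $\sup_t\|u(t)\|_{H^1}<\infty$, so the blow-up criterion \eqref{Blow-Crit} proves global existence.

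For scattering, the key observation is the exponential smallness of $\eta$: since $\underline{\ba}>0$, one has $\eta(t)\le e^{-(p-1)\underline{\ba}\,t}$. Writing Duhamel for $v$ and applying Strichartz estimates for the unitary group $e^{-it\ck}$ on a suitable $H^1$-admissible pair, the source term $\eta(t)|x|^{-b}|v|^{p-1}v$ is controlled in a dual Strichartz norm by the uniform $H^1$-bound on $v$ and Hardy/Sobolev embeddings. The exponential decay of $\eta$ then forces $\{e^{it\ck}v(t)\}$ to be Cauchy in $H^1$ as $t\to\infty$, so it converges to some $u_+\in H^1$. Equivalently $\|u(t)-e^{-\baa(t)}e^{-it\ck}u_+\|_{H^1}\to 0$, which is the claimed scattering.

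The main obstacle I anticipate is the bootstrap closure in case~(b): matching the explicit factor $(N/(N+2-b))^{N/(4-2b)}$ in \eqref{Grd-ass} with the sharp GN constant derived from $\sQ$ and the finite growth budget $\int|\dot\eta|=1$ of $E_v$ demands a careful arithmetic check. A secondary technical point is setting up the Strichartz calculus adapted to $\ck$ and to the inhomogeneous nonlinearity $|x|^{-b}|v|^{p-1}v$, especially in the delicate range $\lambda\in(-\lambda_N,1-\lambda_N)$ where only the Friedrichs extension is available; once these tools are in place, the exponential decay of $\eta$ makes the scattering step essentially routine.
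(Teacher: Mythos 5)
Your overall strategy matches the paper's: the gauge $v=e^{\baa(t)}u$, the modified energy for $v$ (incidentally, your own computation gives $\frac{d}{dt}E_v\ge 0$, so $E_v$ \emph{is} monotone — you presumably meant ``not conserved''), Gagliardo--Nirenberg with the conserved mass of $v$, and Duhamel plus Strichartz driven by the exponential factor $\eta(t)\le e^{-(p-1)\underline{\ba}t}$ for scattering. Part (a) is sound: since $\nu=\frac{N(p-1)}{2}+b<2$ there, your Young-plus-supremum closure is a legitimate, arguably more self-contained, substitute for the nonlinear Gronwall-type lemma the paper imports from \cite{MJM}, and the resulting bound $\sup_t\|v(t)\|_{H^1_\lambda}<\infty$ is exactly the paper's estimate \eqref{GE-est}.

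The gap is in part (b), and it is precisely the obstacle you flag. At mass criticality $\nu=2$ and $\mathtt{K}_{opt}=\frac{p+1}{2}\|\sQ\|^{1-p}$, so with $\varrho=(\|u_0\|/\|\sQ\|)^{p-1}$ and $X(t)=\|\sqrt{\ck}v(t)\|^2$ the energy identity yields
\[
(1-\varrho)\,X(t)\;\le\;\mathbf{E}(u_0)+(p-1)\varrho\int_0^t\ba(s)\,X(s)\,ds .
\]
Closing this with $\sup_{s\le T}X(s)$ and your total-variation budget $\int_0^\infty (p-1)\ba\,\eta\,ds\le 1$ forces the requirement $\varrho<\tfrac12$, i.e. $\|u_0\|<2^{-N/(4-2b)}\|\sQ\|$, which is strictly stronger than \eqref{Grd-ass} because $\frac{N}{N+2-b}>\frac12$ whenever $N>2-b$. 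The paper instead applies the \emph{linear} Gronwall inequality to the displayed estimate, getting $X(t)\le\frac{\mathbf{E}(u_0)}{1-\varrho}\exp\bigl(\frac{(p-1)\varrho}{1-\varrho}\baa(t)\bigr)$; the key point is that $v$ is allowed to grow in $\dot{H}^1_\lambda$, and one only needs $\|\sqrt{\ck}u(t)\|^2=e^{-2\baa(t)}X(t)$ to decay, which holds iff $\frac{(p-1)\varrho}{1-\varrho}<2$, i.e. iff $\varrho<\frac{2}{p+1}=\frac{N}{N+2-b}$ — exactly \eqref{Grd-ass}. So the resolution of your ``careful arithmetic check'' is: abandon the uniform bound on $v$ and use Gronwall. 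This propagates into your scattering step for (b), which as written relies on $\sup_t\|v(t)\|_{H^1_\lambda}<\infty$; under \eqref{Grd-ass} alone you must instead balance the growth $e^{\frac{(p-1)\varrho}{2(1-\varrho)}\baa(t)}$ of $\|v(t)\|_{\dot{H}^1_\lambda}$ against the decay of $\eta$ in the Duhamel integral, which is a genuinely more delicate computation than the ``routine'' one you describe.
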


\begin{rem}
\label{Rem2.3}
~\begin{itemize}{\rm
\item[$(i)$] Assumption \eqref{Grd-ass} imposes a stronger condition than the classical ground state criterion $\|u_0\| < \|\sQ\|$. 
\item[$(ii)$] For $\displaystyle 1<p<1+\frac{4-2b}{N}$, the global existence can be proven as in \cite[Proposition B.1]{HM-Damped NLS}.
\item[$(iii)$] In the mass-critical regime $\displaystyle p=1+\frac{4-2b}{N}$, the global existence is shown in \cite[Theorem 1.6]{HM-Damped NLS} for $\lambda=b=0$ and for $\underline{\ba}$ being sufficiently large. Under Assumption \eqref{Grd-ass}, global existence is obtained without any restriction on the damping term.
\item[$(iv)$] As mentioned before, for example, in \cite[Chapter 7, page 211]{Caz-book} and \cite[Section 3.6, page 162]{Tao-book}, scattering requires the nonlinearity to exhibit superlinear growth near zero. Specifically, the nonlinearity exponent must exceed a critical value, $p_*=p_*(N, \lambda, b)$, which marks the threshold between scattering and the nonexistence of asymptotically free solutions. 
\item[$(v)$] For the standard NLS ($\lambda=b=0$ in \eqref{DINLS}), the critical value is $p_*(N,0,0)=1+\frac{2}{N}$; see \cite{Barab,  Strauss2, Tao-2004, Tsut-Yaj}. Recently, in \cite{BS-2024} it was shown that $p_*(N,0,b)=1+\frac{2-2b}{N}$ for $0<b<1$. Alternatively, a partial result was obtained in \cite{Xia} for $N=3,\lambda>0, b=0$, that is, $p_*(3,\lambda,0)=1+\frac{2}{3}$. Based on the aforementioned recall, we conjecture that $p_*(N, \lambda, b)=1+\frac{2-2b}{N}$ for $\lambda>0$ and $0\le b<1$. 
\item[$(vi)$] We conjecture that the scattering result in Theorem \ref{scat1} remains valid even when the damping function is merely nonnegative for sufficiently large times.}
    \end{itemize}
\end{rem}

The following theorem deals with the global existence and scattering of \eqref{DINLS} in the inter-critical regime.
\begin{thm}\label{t1}
Let $N \ge 3$, $\lambda \ge 0$, $0<b<1$ and $1+\frac{4-2b}{N}<p<1+\frac{4-2b}{N-2}$.  For $u_0 \in H^1(\R^N)$ radial, there exists $\ba^*>0$ such that for all $\underline{\ba} \ge \ba^*$, where $\underline{\ba}$ is defined in \eqref{ais}, the solution to \eqref{DINLS}  exists globally and scatters in $H^1(\R^N)$.
\end{thm}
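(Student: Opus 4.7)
The plan is to remove the damping via the standard change of unknowns $v(t,x) = e^{\baa(t)}u(t,x)$, with $\baa(t)=\int_0^t \ba(s)\,ds$, so that $v$ solves the non-autonomous INLS
\begin{equation*}
i\partial_t v - \ck v = -\alpha(t)\,|x|^{-b}|v|^{p-1}v, \qquad v(0,x)=u_0(x),
\end{equation*}
where $\alpha(t) := e^{(1-p)\baa(t)}$ satisfies $\alpha(t)\le e^{-(p-1)\underline{\ba}\, t}$ for all $t\ge 0$. Since $u$ and $v$ differ at each time only by a positive scalar factor, it suffices to prove that $v$ admits a finite global Strichartz norm on $[0,\infty)$: global existence of $u$ then follows from the blow-up criterion \eqref{Blow-Crit}, and scattering in $H^1$ is a direct consequence of the Duhamel formula for $v$.

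First, I would fix an inter-critical admissible pair $(q,r)$ and work in a Strichartz-type space $X(I)$ controlling $v$ and $\nabla v$ jointly in $L^\infty(I;L^2)$ and $L^q(I;L^r)$, using the $H^1$-Strichartz estimates for $e^{-it\ck}$ available under $\lambda\ge 0$ (global in time on radial data). The Duhamel formula for $v$ combined with these estimates yields
\begin{equation*}
\|v\|_{X([0,T])} \le C\|u_0\|_{H^1} + C \bigl\|\alpha(t)\,|x|^{-b}|v|^{p-1}v\bigr\|_{N([0,T])},
\end{equation*}
with $N$ the dual Strichartz norm. The key estimate I would then establish has the form
\begin{equation*}
\bigl\|\alpha(t)\,|x|^{-b}|v|^{p-1}v\bigr\|_{N([0,T])} \le C\,\|\alpha\|_{L^{p_0}(0,T)}\cdot \|v\|_{X([0,T])}^{p},
\end{equation*}
for some exponent $p_0=p_0(N,p,b)$, obtained by pulling $\alpha$ out in time via H\"older and then disposing of the spatial weight $|x|^{-b}$ via a Hardy--Sobolev embedding near the origin together with a radial Sobolev decay estimate in the exterior region $|x|\gtrsim 1$; both $0<b<1$ and the radiality of $u_0$ are essential here. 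Because $\alpha(t)\le e^{-(p-1)\underline{\ba} t}$, one has $\|\alpha\|_{L^{p_0}(0,\infty)}\le \bigl(p_0(p-1)\underline{\ba}\bigr)^{-1/p_0}$, so choosing $\ba^*$ large enough in terms of $\|u_0\|_{H^1}$ and the structural constants makes $C\|\alpha\|_{L^{p_0}}(2C\|u_0\|_{H^1})^{p-1}\le 1/2$. A standard continuity argument then gives $\|v\|_{X([0,T])}\le 2C\|u_0\|_{H^1}$ for every $T<T^*$, which by \eqref{Blow-Crit} forces $T^*=\infty$ and provides a finite global Strichartz norm; the existence of a scattering state in $H^1$ follows in the usual manner.

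The main obstacle I anticipate is the nonlinear estimate of the second step, which has to absorb three difficulties simultaneously: the inverse-square potential (so the Strichartz estimates are those of $e^{-it\ck}$ and commutators with $\nabla$ rely on the Hardy norm equivalence \eqref{norm-equiv} and the parameter $\kappa$ of \eqref{kappa}), the spatial weight $|x|^{-b}$ (which forces a Hardy--Sobolev splitting near $0$ and a Strauss-type radial bound in the exterior), and the inter-critical scaling (which constrains $(q,r)$ and is precisely the reason for the lower bound $p>1+(4-2b)/N$, as this is the threshold making $p_0>0$). The combined hypotheses $\lambda\ge 0$, $0<b<1$, and radiality are exactly what keep all the relevant exponents in range. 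Once that estimate is established, the bootstrap closure and the scattering conclusion are essentially routine.
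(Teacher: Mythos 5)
Your strategy is sound in outline but genuinely different from the paper's. The paper does not pass to $v$ and does not extract smallness from the coefficient $e^{(1-p)\baa(t)}$ in the nonlinearity. Instead it runs the fixed point directly for $u$ with the damped propagator $U_{\ba,\lambda}(t)=e^{-\baa(t)}U_\lambda(t)$ and invokes the small-critical-norm criterion of Campos--Guzm\'an (Proposition \ref{prop3}, built on Lemma \ref{lem-4.4}): global existence and scattering hold once $\|U_{\ba,\lambda}(\cdot)u_0\|_{L^\theta(0,\infty;L^r)}\le\varepsilon$ for the $s_c$-admissible pair $(\theta,r)$ of \eqref{theta-r-q}. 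The damping enters only through the elementary computation \eqref{Glo-epsi}, namely $\|U_{\ba,\lambda}(\cdot)u_0\|_{L^\theta L^r}^\theta\lesssim\|u_0\|_{H^1}^\theta\int_0^\infty e^{-\theta\underline{\ba}t}\,dt\lesssim\|u_0\|_{H^1}^\theta/(\theta\underline{\ba})$, so $\underline{\ba}\gtrsim\|u_0\|_{H^1}^\theta$ suffices. Radiality and $\lambda\ge0$ are used for the inhomogeneous Strichartz estimate with non-admissible pairs (Lemma \ref{stri-radial}), not for a Strauss-type exterior bound. Your route --- putting the exponential decay on the nonlinear coefficient and closing a bootstrap with $\|\alpha\|_{L^{p_0}(0,\infty)}\lesssim\underline{\ba}^{-1/p_0}$ --- is the mechanism of \cite{Tayachi-23,MJM}; it is viable here, but it obliges you to prove a new Guzm\'an-type nonlinear estimate with explicit positive time slack in the presence of the inverse-square potential and the weight, whereas the paper reuses Lemma \ref{lem-4.4} verbatim and only changes the verification of the smallness hypothesis.

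One point in your plan is wrong as stated and would stall the argument if followed literally: you assert that the lower bound $p>1+\frac{4-2b}{N}$ ``is precisely the reason'' $p_0>0$. It is the opposite. If you place all $p$ factors of $v$ in the same admissible norm $L^q_tL^r_x$ and apply H\"older in time, the exponent left over for $\alpha$ is $\frac{1}{p_0}=\frac{1}{q'}-\frac{p}{q}=1-\frac{p+1}{q}$, and with the natural pair \eqref{theta-r-q} one has $\frac{p+1}{q}=\frac{2b+N(p-1)}{4}>1$ exactly when $p>1+\frac{4-2b}{N}$; so in the inter-critical regime this quantity is negative and no $L^{p_0}$ norm of $\alpha$ with finite $p_0$ can be extracted that way. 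The positive slack must come from energy-subcriticality, not mass-supercriticality: place $p-1$ (or $p-1-\eta$) factors in $L^\infty_tH^1_x$ via Sobolev and the Hardy--Sobolev/Strauss splitting, and only one factor in an admissible Strichartz norm, which yields $\frac{1}{p_0}=1-\frac{2}{q}>0$ since $q>2$ throughout the energy-subcritical range. With that correction (and attention to the range of $r$ for which the equivalence $\|\sqrt{\ck}f\|_{L^r}\sim\|\nabla f\|_{L^r}$ of Lemma \ref{equiv-norm-leb} is available, which is a real constraint for $N=3,4$), your bootstrap closes and gives the theorem.
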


While the energy-critical case presents significant challenges for the undamped NLS (\eqref{DINLS} with $\ba(t)=0$), the inclusion of the damping term facilitates scattering, as established in the following theorem.
\begin{thm}
\label{LWP}
Let $N\ge3, \ 0<b<\frac{4}{N}, \ p=\frac{N+2-2b}{N-2}$, $r= \frac{2N(N+2-2b)}{N^2-2Nb+4}$ and $\lambda > -\frac{(N+2-2b)^2-4}{(N+2-2b)^2}\lambda_N$ with $\lambda_N$ given by \eqref{lambda_N}. For $u_0 \in H^1(\R^N)$, there exists $\ba^{**}>0$ such that for all $\underline{\ba} \ge \ba^{**}$, the  problem \eqref{DINLS} has a  unique global solution verifying
\begin{equation}\label{u-sol}
    u \in C(\R_+; H^1(\R^N)) \cap L^{\alpha(r)}(\R_+; W^{1,r}(\R^N)).
\end{equation}
Moreover, the solution $u$ satisfies
\begin{equation}
    \label{est-global}
    \|u\|_{L^{\alpha(r)}((t_0, +\infty);\ W^{1,r})}\leq C {\rm e}^{-\underline{\ba}t_0}\|u_0\|_{H^1},
\end{equation}
where $\left(\alpha(r),r\right)$ is an admissible pair in the sense of \eqref{adm} and $t_0>0$ is arbitrary chosen. Furthermore, the solution $u$ scatters.
\end{thm}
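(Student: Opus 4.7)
The plan is to absorb the damping into an exponential prefactor by setting $v(t,x)={\rm e}^{\baa(t)}u(t,x)$ with $\baa(t):=\int_0^t\ba(s)\,ds$. A direct computation transforms \eqref{DINLS} into
\begin{equation*}
i\partial_t v-\ck v=\mu\,{\rm e}^{(1-p)\baa(t)}\,|x|^{-b}|v|^{p-1}v,\qquad v(0,x)=u_0(x).
\end{equation*}
Since $p>1$ and $\baa(t)\geq\underline{\ba}\,t$, the prefactor ${\rm e}^{(1-p)\baa(t)}$ decays exponentially in time, and this decay is exactly what will supply the smallness needed in the energy-critical regime. The Strichartz framework for $\ck$ tuned to the admissible pair $(\alpha(r),r)$ with $r=\frac{2N(N+2-2b)}{N^2-2Nb+4}$, together with the associated $W^{1,r}$ calculus, is already available from \cite{Kim} under the standing hypothesis $\lambda>-\frac{(N+2-2b)^2-4}{(N+2-2b)^2}\lambda_N$.

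The next step is to close a contraction on a ball of $X:=L^{\alpha(r)}(\R_+;W^{1,r}(\R^N))$. Writing the Duhamel formula
\begin{equation*}
v(t)={\rm e}^{-it\ck}u_0-i\mu\int_0^t{\rm e}^{-i(t-s)\ck}\,{\rm e}^{(1-p)\baa(s)}|x|^{-b}|v|^{p-1}v(s)\,ds,
\end{equation*}
the linear term is controlled by $C\|u_0\|_{H^1}$ via Strichartz. For the nonlinear term I would reproduce the inhomogeneous Sobolev/Hardy estimates used in the energy-critical INLS analysis of \cite{Kim}, which would normally yield $C\|v\|_X^{p}$, and then peel off the time factor ${\rm e}^{(1-p)\baa(s)}$ by an additional H\"older step in $s$. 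The resulting multiplicative gain is
\begin{equation*}
\bigl\|{\rm e}^{(1-p)\baa(s)}\bigr\|_{L^{\sigma}(\R_+)}\leq\bigl((p-1)\underline{\ba}\,\sigma\bigr)^{-1/\sigma}\xrightarrow[\underline{\ba}\to\infty]{}0,
\end{equation*}
for a suitable exponent $\sigma\in(1,\infty)$, and can be made arbitrarily small by choosing $\underline{\ba}\geq\ba^{**}$ with $\ba^{**}$ large enough. This is what allows closing the fixed point on a ball of radius $\sim\|u_0\|_{H^1}$ in the critical regime without any smallness assumption on the initial data, and a standard difference estimate in $X$ (treated by the same H\"older trick) gives uniqueness and continuity.

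From the global bound of $v$ in $X$, the estimate \eqref{est-global} for $u={\rm e}^{-\baa}v$ is obtained by rerunning the Strichartz analysis on the half-line $[t_0,\infty)$ and using ${\rm e}^{-\baa(t)}\leq{\rm e}^{-\underline{\ba}t_0}$ on that interval, producing the announced factor ${\rm e}^{-\underline{\ba}t_0}$. Scattering then follows from the uniform $X$-bound on $v$: combining it with the dual Strichartz inequality shows that the tail of the nonlinear Duhamel integral is Cauchy in $H^1(\R^N)$, so ${\rm e}^{it\ck}v(t)$ converges in $H^1$ to some $v_+$, and translating back through $u={\rm e}^{-\baa}v$ yields the natural scattering of the damped solution. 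The main obstacle is the energy-critical scaling: Strichartz smallness does not arise automatically from shrinking the time window here, so the H\"older exponents in time must be balanced carefully so that both the $L^{\sigma}$-norm of ${\rm e}^{(1-p)\baa}$ and the $X$-norm of $v$ land inside admissible pairs for $\ck$. The lower bound on $\lambda$ is precisely what makes the $W^{1,r}$ calculus for $\ck$ compatible with the singular weight $|x|^{-b}$ as in \cite{Kim}, so the bulk of the technical work is to verify that the contraction estimates of \cite{Kim} can be concatenated with the exponential-in-time smallness provided by $\underline{\ba}$ without losing a derivative when commuting $\ck^{1/2}$ through the inhomogeneous nonlinearity.
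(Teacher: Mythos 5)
Your overall strategy coincides with the paper's: pass to $v={\rm e}^{\baa}u$, run a global-in-time fixed point on a ball of $L^{\alpha(r)}(\R_+;W^{1,r})$ using the Strichartz estimates for $\ck$ and the nonlinear estimates of \cite{Kim} (Lemma \ref{NLE}), let the damping supply the smallness that the critical scaling cannot, and then obtain \eqref{est-global} and scattering from a Duhamel/bootstrap estimate on $[t_0,\infty)$. The one place where your argument genuinely differs is the mechanism by which the factor ${\rm e}^{(1-p)\baa(s)}$ is converted into a small constant, and that is exactly where your proposal has a gap.

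You propose to peel off ${\rm e}^{(1-p)\baa(s)}$ by an extra H\"older step in time, using $\|{\rm e}^{(1-p)\baa}\|_{L^{\sigma}(\R_+)}\lesssim ((p-1)\sigma\underline{\ba})^{-1/\sigma}$. But in the energy-critical regime the time exponents in Lemma \ref{NLE} are exactly saturated: with $\sigma(N)=\frac{2N}{N-2}$ one has $\frac{1}{\alpha(\sigma)'}=\frac12=\frac{p}{\alpha(r)}$, so the nonlinearity $|x|^{-b}|v|^{p-1}v$ lands precisely in $L^2_t\dot W^{1,\sigma(N)'}$ when $v$ is measured in $L^{\alpha(r)}_tW^{1,r}$, with no slack left in the time integrability. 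Inserting a factor in $L^{\sigma}_t$ for finite $\sigma$ would force the nonlinearity into $L^{\rho}_t$ with $\rho>2$, hence $v\in L^{p\rho}_t\dot W^{1,r}$ with $p\rho>\alpha(r)$ --- a norm not controlled by your ball (and not reachable by interpolating against $L^\infty_tH^1$, since $H^1$ does not control $\nabla v$ in $L^r$ for $r>2$). You flag this exponent-balancing as ``the main obstacle'' but do not resolve it, so as written the contraction does not close. The paper avoids the issue by extracting the exponential in sup norm over time (choosing $\ba^{**}$ so that $C M^{p-1}{\rm e}^{-(p-1)\ba^{**}}\le\frac12$) rather than in an integrated norm; note, however, that since ${\rm e}^{(1-p)\baa(0)}=1$, even that device requires some care near $t=0$, and the cleanest fix is to keep the factor inside the Duhamel integral and bound it pointwise by a constant that is made small through the choice of $\ba^{**}$ exactly as in the proof of Proposition \ref{LWP1}. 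If you replace your $L^{\sigma}_t$ H\"older step by that pointwise extraction, the rest of your argument (difference estimate, tail estimate on $[t_0,\infty)$ giving the factor ${\rm e}^{-\underline{\ba}t_0}$, and the Cauchy criterion for ${\rm e}^{it\ck}v(t)$ in $H^1$) matches the paper's proof.
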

\begin{rem}
\label{Rem2.4}
~\begin{itemize}{\rm 
\item[$(i)$] As it will be shown later, the constant $\ba^{**}$ will be made explicit as follows:
    \begin{equation}
    \label{GWP-cond}
\ba^{**}:=\frac{1}{1-p} \left[\ln\left((2\mathbf{c}_0)^{-p} \|u_0\|_{H^1}^{1-p}\right)\right]_{-},
\end{equation}
where $\mathbf{c}_0$ is the best constant in the Strichartz estimates and $\xi_{-}=\min \{\xi,0\}$.
\item[$(ii)$] The above theorem highlights the influence of the damping term on the scattering behavior in the energy-critical regime.
\item[$(iii)$] Note that, among many other interesting results, the global existence and scattering for the critical value $\displaystyle p=1+\frac{4}{N-2}$ in the defocusing case is studied in the seminal work \cite{CKSTT-08} for $N= 3$, $\lambda=b=0$ and $\mu=1$.
\item[$(iv)$]  It is worth noting that the damping term forces the solution $u$ to exhibit an exponential scattering in all the aforementioned regimes, in the sense of \cite[Definition 1.1]{inui}.}
\end{itemize}
\end{rem}

The structure of the article is as follows. In Section \ref{aux}, we lay out the foundational framework for analyzing equation \eqref{DINLS}, along with several auxiliary results that will support our main arguments. Section \ref{mass} is devoted to the proof of Theorem \ref{scat1}, which addresses the sub-mass-critical case. In Section \ref{inter}, we turn to the proof of Theorem \ref{t1}, focusing on the inter-critical regime. The energy-critical case, as stated in Theorem \ref{LWP}, is established in Section \ref{crit}. Finally, we conclude the article in Section \ref{cr} with some final remarks and a discussion of open questions.
\section{Background and auxiliary results}\label{aux}
This section is dedicated to introducing the necessary notation, along with several auxiliary results and useful estimates.

For simplicity, we will employ the following notations: 
\begin{equation*}
\|\cdot\|_{L^r}:=\|\cdot\|_{L^r(\R^N)},\quad \|\cdot\|:=\|\cdot\|_{L^2},\quad\|\cdot\|_{H^1}:=\|\cdot\|_{H^1(\R^N)}.
\end{equation*}
Then, we define some energy-related quantities associated with equation \eqref{DINLS}, namely
\begin{eqnarray}
{\mathbf M}[u(t)]&:=& \int_{\R^N}|u(t,x)|^2\,dx, \label{M-u}\\
{\mathbf E}[{u(t)}]&:=& \int_{\R^N} |\nabla u(t,x)|^2 \,dx + \lambda \int_{\R^N} \frac{|u(t,x)|^2}{|x|^2} \,dx - \frac{2}{p+1} \int_{\R^N} |x|^{-b}|u(t,x)|^{p+1}\,dx, \label{E-u}\\
 {\mathbf I}[u(t)]&:=& \int_{\R^N} |\nabla u(t,x)|^2 \,dx + \lambda \int_{\R^N} \frac{|u(t,x)|^2}{|x|^2} \,dx  -\int_{\R^N} |x|^{-b}|u(t,x)|^{p+1}\,dx.\label{I-u}
\end{eqnarray}
Observe that the energy can be expressed as
\begin{equation}
    \label{E-uu}
    {\mathbf E}[{u(t)}]= \|\sqrt{\ck} u(t)\|^2 - \frac{2}{p+1} \int_{\R^N} |x|^{-b}|u(t,x)|^{p+1}\,dx.
\end{equation}

Next, we introduce the following notation:
\begin{gather}
\label{A}
\baa(t)=\int_0^{|t|}\,\ba(s)ds, \quad t \in \R,\\
\label{ais}
\underline{\ba}=\inf_{t>0}\,\bigg(\frac{\baa(t)}{t}\bigg), \quad \overline{\ba}=\sup_{t>0}\,\bigg(\frac{\baa(t)}{t}\bigg).
\end{gather}
To the best of our knowledge, the quantities defined in \eqref{A}–\eqref{ais} were first introduced in \cite{HM-Damped NLS}, within the context of the damped nonlinear Schr\"odinger equation.

Now, denoting by $U_{\ba, \lambda}(t)$ the free propagator associated with \eqref{DINLS}, one can easily verify that
\begin{equation}
\label{U-a}
U_{\ba,\lambda}(t)={\rm e}^{-\baa(t)}\,U_{\lambda}(t),
\end{equation}
where $\baa(t)$ is given by \eqref{A} and 
\begin{equation}
\label{U-lambda}
    U_{\lambda}(t):=e^{-it\mathcal K_\lambda}, \quad (\ck=-\Delta+\lambda|x|^{-2}).
\end{equation}
It is then quite classical that the Cauchy problem for \eqref{DINLS} can be written in an integral form (see \cite{Caz-book}):
\begin{equation}
\label{Integ-Eq}
u(t)=U_{\ba,\lambda}(t)u_0 + i \Phi_{\ba, \lambda}\Big[|\cdot|^{-b}|u(\tau)|^{p-1}u(\tau)\Big](t),
\end{equation}
where 
\begin{equation}
\label{Phi}
\Phi_{\ba, \lambda}[f](t):= \int_0^t\,U_{\ba,\lambda}(t-\tau) f(\tau)\,d\tau.
\end{equation}

The following proposition presents two well-known identities related to the quantities defined in \eqref{M-u}–\eqref{I-u}. 
\begin{prop}
\label{Rela}
Let $u$ be a sufficiently smooth solution of \eqref{DINLS}  on $0\leq t\leq T$. Then, we have
\begin{align}
\label{M-Id}
&{\mathbf M}(u(t))={\rm e}^{-2\baa(t)}{\mathbf M}(u_0),\vspace{.3cm}\\
\label{E-Id}
&\frac{d}{dt}{\mathbf E}(u(t))=-2\ba(t) {\mathbf I}(u(t)).
\end{align}
\end{prop}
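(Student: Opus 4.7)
The plan is to derive both identities by the classical multiplier technique applied to \eqref{DINLS}, exploiting the self-adjointness of $\ck$ encoded in the quadratic form \eqref{quad-form}. Under the assumed smoothness hypothesis, together with the $H^1_\lambda$ framework and Hardy's inequality \eqref{Hardy}, every integral below is absolutely convergent and differentiation under the integral sign is legitimate.

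For the mass identity \eqref{M-Id}, I would pair \eqref{DINLS} with $\bar u$, integrate over $\R^N$, and take imaginary parts. Self-adjointness of $\ck$ makes $\int \bar u\,\ck u\,dx$ real, while $\int |x|^{-b}|u|^{p+1}\,dx$ is manifestly real, so these two terms drop and what remains is the first-order linear ODE
\[
\tfrac12\tfrac{d}{dt}\mathbf{M}(u(t))+\ba(t)\,\mathbf{M}(u(t))=0.
\]
Integrating with initial datum $\mathbf{M}(u_0)$ and inserting the definition \eqref{A} of $\baa(t)$ produces \eqref{M-Id}.

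For the energy identity \eqref{E-Id}, I would differentiate \eqref{E-uu} in time and use self-adjointness once more to obtain
\[
\tfrac{d}{dt}\mathbf{E}(u(t))=2\,\mathrm{Re}\!\int \bigl(\ck u - |x|^{-b}|u|^{p-1}u\bigr)\overline{\partial_t u}\,dx.
\]
Rearranging \eqref{DINLS} (with the focusing sign $\mu=-1$ consistent with the definition \eqref{E-u}) gives $\ck u - |x|^{-b}|u|^{p-1}u = i\partial_t u + i\ba(t)u$. Substituting this in, the $i|\partial_t u|^2$ piece has vanishing real part and drops, leaving $\tfrac{d}{dt}\mathbf{E}(u(t)) = -2\ba(t)\,\mathrm{Im}\int u\,\overline{\partial_t u}\,dx$. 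A second pairing of \eqref{DINLS} with $\bar u$ -- this time \emph{retaining} the real part that was discarded in the mass step -- identifies $\mathrm{Im}\int u\,\overline{\partial_t u}\,dx = \mathbf{I}(u(t))$, which yields \eqref{E-Id}.

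The only genuinely delicate point is the interpretation of $\ck u$ in the non-unique-extension regime $-\lambda_N<\lambda<1-\lambda_N$; the Friedrichs extension singled out around \eqref{quad-form} renders every form computation above unambiguous. For a merely $H^1$ solution obtained via the transformation $v={\rm e}^{\baa(t)}u$ and the Strichartz scheme mentioned in the introduction, the formal manipulations would be justified by a standard regularization-and-limit argument, but no new obstacle arises.
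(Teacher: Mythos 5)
Your argument is correct: the mass identity follows from pairing with $\bar u$ and taking imaginary parts, the energy identity from the multiplier $\overline{\partial_t u}$ together with the real part of the same pairing, and both computations check out with the focusing sign conventions of \eqref{E-u}--\eqref{I-u}. The paper gives no written proof but simply refers to adapting \cite[Lemma 1]{Tsut1}, which is precisely the standard multiplier computation you carried out, so your proposal matches the intended argument.
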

The proof of Proposition \ref{Rela} can be derived by suitably adapting the argument from \cite[Lemma 1]{Tsut1}.\\

Taking the transformation $v(t,x):=e^{\baa(t)}u(t,x)$, we are reduced to study the equivalent problem

\begin{equation}
\left\{
\begin{array}{ll}
i\partial_t v-\mathcal{K}_{\lambda}v=- e^{-(p-1)\baa(t)}|x|^{-b}|v|^{p-1}v,\\
v(0,x)=u_0(x),
\label{DINLS-v}
\end{array}
\right.
\end{equation}
where $\baa(t)$ is given by \eqref{A}.\\
Note that the global existence for $v$
automatically guarantees the same for $u$, and conversely.

In the following lemma we give the property of the equivalence of norms for the operator $\mathcal{K}_{\lambda}$ in Lebesgue spaces. For further readings, see \cite[Theorem 1.2]{kmvzz}.
\begin{lem}
    \label{equiv-norm-leb}
    Let $N \ge 3$ and $\lambda \ge -\lambda_N$. Assume either $``\lambda >0 \ \text{and} \ 1<r<N"$ or $``\lambda <0 \ \text{and} \ \frac{N}{N-\kappa}<r<\frac{N}{1+\kappa}"$, where $\kappa$ is defined by \eqref{kappa}.  Then, we have
    \begin{equation}
        \label{equiv-norm-id}
        \|\sqrt{\mathcal{K}_{\lambda}} f \|_{L^r} \lesssim_{N,r} \|\nabla f \|_{L^r}\lesssim_{N,r} \|\sqrt{\mathcal{K}_{\lambda}} f \|_{L^r}, \quad \text{for all} \ f \in \mathcal{C}^{\infty}_c (\R^N).
    \end{equation}
\end{lem}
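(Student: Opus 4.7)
The plan is to prove the equivalence by reducing it to the boundedness of two generalized Riesz-type transforms, namely $\sqrt{\mathcal{K}_\lambda}\,(-\Delta)^{-1/2}$ and $(-\Delta)^{1/2}\,\mathcal{K}_\lambda^{-1/2}$, on $L^r(\mathbb{R}^N)$. Indeed, by density it suffices to argue for $f\in C^\infty_c(\mathbb{R}^N\setminus\{0\})$; writing
\begin{equation*}
\sqrt{\mathcal{K}_\lambda}\,f \;=\; \bigl[\sqrt{\mathcal{K}_\lambda}\,(-\Delta)^{-1/2}\bigr]\,(-\Delta)^{1/2} f, \qquad (-\Delta)^{1/2} f \;=\; \bigl[(-\Delta)^{1/2}\mathcal{K}_\lambda^{-1/2}\bigr]\,\sqrt{\mathcal{K}_\lambda}\,f,
\end{equation*}
and combining with the classical equivalence $\|\nabla f\|_{L^r}\sim \|(-\Delta)^{1/2} f\|_{L^r}$ for $1<r<\infty$ (Calder\'on--Zygmund/Riesz transform theory), both inequalities in \eqref{equiv-norm-id} follow once the two operators above are shown to be bounded on $L^r$ for the stated range.

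The next step is to exploit the known heat-kernel bounds for $e^{-t\mathcal{K}_\lambda}$. For $\lambda\ge -\lambda_N$ the semigroup admits a two-sided Gaussian estimate of the form
\begin{equation*}
e^{-t\mathcal{K}_\lambda}(x,y) \;\asymp\; t^{-N/2}\Bigl(1+\tfrac{\sqrt{t}}{|x|}\Bigr)^{\kappa}\Bigl(1+\tfrac{\sqrt{t}}{|y|}\Bigr)^{\kappa}\,e^{-c|x-y|^2/t},
\end{equation*}
with $\kappa$ as in \eqref{kappa} (this is due to Liskevich--Sobol and Milman--Semenov, and is the starting point of \cite{kmvzz}). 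Using the subordination formulas
\begin{equation*}
\mathcal{K}_\lambda^{-1/2} \;=\; \frac{1}{\sqrt{\pi}}\int_0^\infty e^{-s\mathcal{K}_\lambda}\,\frac{ds}{\sqrt{s}}, \qquad \sqrt{\mathcal{K}_\lambda}\;=\;\frac{1}{\sqrt{\pi}}\int_0^\infty\bigl(I-e^{-s\mathcal{K}_\lambda}\bigr)\,\frac{ds}{s^{3/2}},
\end{equation*}
one obtains an explicit integral kernel representation for $\sqrt{\mathcal{K}_\lambda}\,(-\Delta)^{-1/2}$ and its inverse. The resulting kernels then inherit a pointwise upper bound that factors as a classical Riesz kernel $|x-y|^{-(N-1)}$ times the weights $(|x|/|y|)^{\pm\kappa}$ near the origin.

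The third step is to pass from these kernel estimates to the $L^r$-bounds via a Schur-type argument combined with weighted Hardy--Littlewood--Sobolev inequalities. Here the sharp range restriction materializes: when $\lambda<0$ one has $\kappa>0$, so the weight $|x|^{-\kappa}$ is singular at the origin, forcing $\tfrac{N}{N-\kappa}<r<\tfrac{N}{1+\kappa}$ in order for both the weight and its dual to be locally integrable against the Riesz kernel; when $\lambda>0$ one has $\kappa\le 0$, the weight is bounded and only the usual Sobolev constraint $1<r<N$ survives.

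The main obstacle is precisely the tracking of those weighted kernel estimates through the subordination integral, and in particular verifying that the endpoint range is sharp. In practice, this is where one appeals directly to \cite[Theorem 1.2]{kmvzz}, whose hypotheses are immediately met in our setting since $\lambda>-\lambda_N$ and the Friedrichs extension \eqref{quad-form} is used; thus the lemma follows by quoting the cited result, while the sketch above records the conceptual skeleton of the argument.
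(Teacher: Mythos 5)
Your proposal is correct and ultimately coincides with what the paper does: the lemma is stated without proof and justified solely by citing \cite[Theorem 1.2]{kmvzz}, which is exactly where your argument lands in its final paragraph. The preceding sketch (reduction to the $L^r$-boundedness of $\sqrt{\mathcal{K}_\lambda}(-\Delta)^{-1/2}$ and $(-\Delta)^{1/2}\mathcal{K}_\lambda^{-1/2}$ via heat-kernel bounds and subordination, with the weight $|x|^{\pm\kappa}$ dictating the sharp range of $r$) is a faithful outline of how that cited theorem is actually proved, so it adds context but does not constitute a different route.
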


Before stating the Strichartz estimates, the next definition \eqref{adm} and observation \eqref{adm-s} are required.
\begin{defi}\label{df}
Let $N\ge 3$. A couple of real numbers $(q,r)$ is said to be  admissible if 
\begin{equation}\label{adm}
    \frac{2}{q}=N\left(\frac12-\frac1r \right), \quad 2\le r \le \frac{2N}{N-2}.
\end{equation}
    \end{defi}
\begin{rem}
{\rm Note that the above definition can be extended to the so-called 
     $s-$admissible statement, namely a couple of real numbers $(q,r)$ is said to be  $s-$admissible if 
\begin{equation}\label{adm-s}
    \frac{2}{q}=N\left(\frac12-\frac1r \right)-s, \quad \frac{2N}{N-2s}<r<\frac{2N}{N-2}, \quad s>0.
    \end{equation}}
\end{rem}

Now, in the next lemmas, we will state the Strichartz estimates \cite{bpst,cg,df, msz}.
\begin{lem}\label{str}
Let $N\geq3$, $\lambda>-\lambda_N$ and $0\in I$ be a real interval. Assume further that $(q,r)$ and $(\tilde{q}, \tilde{r})$ are two admissible pairs in the sense of \eqref{adm}. Then, there exists $C>0$ such that 
\begin{equation}
    \label{str1}
\|e^{-i t \mathcal K_\lambda}f\|_{L^q(I; L^r)}\leq C\|f\|_{L^2},
\end{equation}
and
\begin{equation}\label{str2}
\left\|\int_0^{t}e^{-i(t-\tau)\mathcal K_\lambda}g(\tau)\,d\tau\right\|_{L^q(I; L^r)}\leq C\|g\|_{L^{\tilde{q}'}(I; L^{\tilde{r}'})}.
\end{equation}
\end{lem}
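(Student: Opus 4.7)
The plan is to derive both estimates from a single analytic ingredient, namely the pointwise-in-time dispersive bound for the propagator, and then turn the abstract crank of the Keel--Tao framework.

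First, I would invoke the dispersive estimate
$$\|e^{-it\ck}f\|_{L^\infty(\R^N)} \lesssim |t|^{-N/2}\,\|f\|_{L^1(\R^N)}, \qquad t\neq 0,$$
valid for every $\lambda>-\lambda_N$. This is the nontrivial analytic input and is the content of \cite{bpst}: it is proven by combining sharp heat-kernel bounds for $e^{-t\ck}$ with a spherical-harmonic decomposition that diagonalizes the angular part, followed by Hankel-type representations adapted to the spectral resolution of $\ck$. Because $\ck$ is a nonnegative self-adjoint operator on $L^2(\R^N)$, the spectral theorem gives at once the mass isometry $\|e^{-it\ck}f\|_{L^2}=\|f\|_{L^2}$. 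Riesz--Thorin interpolation between these two endpoints yields the full family of decay estimates
$$\|e^{-it\ck}f\|_{L^r(\R^N)} \lesssim |t|^{-N\left(\frac{1}{2}-\frac{1}{r}\right)}\,\|f\|_{L^{r'}(\R^N)}, \qquad 2\le r \le \infty,$$
which is precisely the hypothesis of an abstract Schr\"odinger-type decay bound.

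Second, I would feed this decay into the Keel--Tao abstract Strichartz theorem. Setting $T(t):=e^{-it\ck}$, the $TT^*$ identity together with Hardy--Littlewood--Sobolev (in the time variable) converts the above pointwise decay into the homogeneous estimate \eqref{str1} for every admissible pair $(q,r)$ in the sense of \eqref{adm}. Crucially, the Keel--Tao argument covers also the double endpoint $(q,r)=\bigl(2,\tfrac{2N}{N-2}\bigr)$ when $N\geq 3$, which explains the range stated in Definition~\ref{df}. To obtain the inhomogeneous estimate \eqref{str2} for arbitrary admissible pairs $(q,r)$ and $(\tilde q,\tilde r)$, I would combine the homogeneous bound (applied to its dual) with the Christ--Kiselev lemma to restore the time-ordering $\tau\leq t$ in the Duhamel integral; this step requires $q>\tilde q'$, with the remaining diagonal case handled directly by the bilinear form produced by Keel--Tao.

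The main obstacle is the dispersive estimate itself: the inverse-square potential $\lambda|x|^{-2}$ is scaling-critical and sign-indefinite (for $\lambda<0$), so the naive strategy of Duhamel-expanding around $e^{it\Delta}$ fails because the perturbative series does not converge. One is therefore forced to exploit the explicit spectral structure of $\ck$ via Bessel functions on each angular sector, which is precisely what is carried out in \cite{bpst} and refined in \cite{cg,df,msz}. Once this dispersive input is available, the remainder of the proof is the standard abstract Keel--Tao machinery and needs no further structural information about the potential.
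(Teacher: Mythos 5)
The paper does not prove this lemma --- it is quoted from \cite{bpst,cg,df,msz} --- so the only question is whether your proposed proof is sound. It is not, as written: the analytic input you start from, namely the full dispersive bound $\|e^{-it\ck}f\|_{L^\infty}\lesssim |t|^{-N/2}\|f\|_{L^1}$ for \emph{every} $\lambda>-\lambda_N$, is false. The paper itself records this (see the remark following Proposition~\ref{dsp}): the $L^{\infty}$--$L^{1}$ dispersive estimate fails at least for $-\lambda_N<\lambda<0$, by \cite[Remark 1, p.~530]{bpst}, and for $\lambda>0$ it is only known in the radial setting \cite{zheng-18}. What is actually available (Proposition~\ref{dsp}, from \cite[Corollary 1.7]{fffp}) is the restricted-range estimate
\begin{equation}
\|U_{\lambda}(t)\varphi\|_{L^{r}}\lesssim |t|^{-N(\frac12-\frac1r)}\|\varphi\|_{L^{r'}},\qquad 2\le r<\frac{N}{\kappa_+},
\end{equation}
with $\kappa$ as in \eqref{kappa}. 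So the Riesz--Thorin interpolation between $L^2$ conservation and an $L^1\to L^\infty$ endpoint, and with it the standard Keel--Tao hypothesis, is not legitimately in your hands; citing \cite{bpst} for that endpoint attributes to them the opposite of what they prove.

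The argument can be repaired, but not by the route you describe. Since $\frac{2N}{N-2}<\frac{N}{\kappa_+}$ in all cases (item (iv) of the remark after Proposition~\ref{dsp}), every admissible exponent $r$ in \eqref{adm} lies inside the range where the fixed-exponent dispersive bound holds, and for each \emph{non-endpoint} admissible pair the $TT^*$ argument plus Hardy--Littlewood--Sobolev in time needs only the $L^{r'}\to L^{r}$ decay for that single $r$; this, together with Christ--Kiselev for the off-diagonal inhomogeneous estimates, recovers \eqref{str1}--\eqref{str2} away from $(q,r)=(2,\frac{2N}{N-2})$. The endpoint, which Definition~\ref{df} includes, cannot be reached this way and requires a genuinely different mechanism --- either the Keel--Tao bilinear interpolation run with the truncated decay range, or the original approach of \cite{bpst}, which bypasses dispersive estimates altogether by writing Duhamel around $e^{it\Delta}$ and absorbing $\lambda|x|^{-2}u$ via scale-invariant local smoothing estimates. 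You should either restrict your claim to non-endpoint pairs or supply one of these endpoint arguments; as it stands the proof has a genuine gap precisely where the inverse-square potential makes the problem nontrivial.
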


\begin{lem}\label{str-s}
Let $N\geq3, s>0$, $\lambda>-\lambda_N$ and $0\in I$ be a real interval. Assume that $(q,r)$ be an $s-$admissible pair in the sense of \eqref{adm-s}. Then, there exists $C>0$ such that 
\begin{equation}
    \label{str3}
\|e^{-i t \mathcal K_\lambda}f\|_{L^q(I; L^r)}\leq C\|f\|_{\dot{H}^s_{\lambda}}.
\end{equation}
\end{lem}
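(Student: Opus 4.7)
\textbf{Proof proposal for Lemma \ref{str-s}.} The plan is to deduce the $s$-admissible estimate from the $L^2$-admissible one in Lemma \ref{str} via a Sobolev-type embedding adapted to $\mathcal{K}_\lambda$. First I would introduce the auxiliary exponent $\tilde r$ defined by
\begin{equation*}
\frac{1}{\tilde r} := \frac{1}{r} + \frac{s}{N}.
\end{equation*}
The scaling identity $\frac{2}{q}=N\bigl(\frac12-\frac1r\bigr)-s$ defining $s$-admissibility then becomes $\frac{2}{q}=N\bigl(\frac12-\frac{1}{\tilde r}\bigr)$, so $(q,\tilde r)$ satisfies the scaling balance of an $L^2$-admissible pair. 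The endpoint constraints need a short verification: the inequality $r>\frac{2N}{N-2s}$ yields $\frac{1}{\tilde r}<\frac12$, i.e.\ $\tilde r>2$, while $r<\frac{2N}{N-2}$ gives $\frac{1}{\tilde r}>\frac12-\frac{1-s}{N}>\frac12-\frac{1}{N}$ (note that a nonempty $s$-admissible window forces $0<s<1$), so $\tilde r<\frac{2N}{N-2}$. Hence $(q,\tilde r)$ is $L^2$-admissible in the sense of \eqref{adm}.

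Next, I would invoke the Sobolev-type embedding attached to $\mathcal K_\lambda$, which, thanks to the equivalence of Sobolev norms from Lemma \ref{equiv-norm-leb} together with the classical fractional Sobolev embedding (or, equivalently, the heat-kernel/functional-calculus estimates of \cite{kmvzz} applied to $\mathcal K_\lambda$), reads
\begin{equation*}
\|g\|_{L^r}\;\lesssim\; \|\mathcal K_\lambda^{s/2} g\|_{L^{\tilde r}}.
\end{equation*}
Applying this to $g = U_\lambda(t)f = e^{-it\mathcal K_\lambda}f$ and exploiting the commutativity of $\mathcal K_\lambda^{s/2}$ with the free propagator (which comes directly from the spectral calculus of the self-adjoint operator $\mathcal K_\lambda$), I obtain
\begin{equation*}
\bigl\|e^{-it\mathcal K_\lambda}f\bigr\|_{L^r}\;\lesssim\; \bigl\|\mathcal K_\lambda^{s/2}\,e^{-it\mathcal K_\lambda}f\bigr\|_{L^{\tilde r}}\;=\;\bigl\|e^{-it\mathcal K_\lambda}\,\mathcal K_\lambda^{s/2} f\bigr\|_{L^{\tilde r}}.
\end{equation*}

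Finally, taking the $L^q(I)$ norm in time and applying the $L^2$-admissible Strichartz estimate \eqref{str1} of Lemma \ref{str} to the pair $(q,\tilde r)$ with datum $\mathcal K_\lambda^{s/2} f$, I conclude
\begin{equation*}
\bigl\|e^{-it\mathcal K_\lambda}f\bigr\|_{L^q(I;L^r)}\;\lesssim\;\bigl\|e^{-it\mathcal K_\lambda}\,\mathcal K_\lambda^{s/2} f\bigr\|_{L^q(I;L^{\tilde r})}\;\lesssim\;\bigl\|\mathcal K_\lambda^{s/2} f\bigr\|_{L^2}\;=\;\|f\|_{\dot H^s_\lambda},
\end{equation*}
which is the desired bound.

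The main obstacle I anticipate is not the scaling bookkeeping but the precise verification that the Sobolev embedding for $\mathcal K_\lambda$ is available for the auxiliary pair $(\tilde r,r)$ throughout the full range $\lambda>-\lambda_N$. In particular, when $\lambda<0$, Lemma \ref{equiv-norm-leb} only provides the equivalence $\|\sqrt{\mathcal K_\lambda} f\|_{L^{\tilde r}}\sim\|\nabla f\|_{L^{\tilde r}}$ in the restricted window $\frac{N}{N-\kappa}<\tilde r<\frac{N}{1+\kappa}$, with $\kappa$ given in \eqref{kappa}, so a short case analysis on the sign of $\lambda$ (and an interpolation argument to promote the $s=1$ equivalence to fractional $s\in(0,1)$) is needed to ensure that the auxiliary exponent $\tilde r$ lies in the admissible window. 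Aside from this technical check, the proof is a direct composition of a Sobolev embedding with the $L^2$-admissible Strichartz estimate already established in Lemma \ref{str}.
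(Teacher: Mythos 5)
Your argument is correct and is, in substance, the standard derivation: the paper states Lemma \ref{str-s} without proof, citing \cite{bpst,cg,df,msz}, and the proof in those references (see e.g.\ \cite{cg}) is exactly your reduction --- a Sobolev embedding adapted to $\mathcal K_\lambda$ composed with the admissible Strichartz estimate \eqref{str1} for the auxiliary pair $(q,\tilde r)$, using that $\mathcal K_\lambda^{s/2}$ commutes with $e^{-it\mathcal K_\lambda}$ by the spectral theorem. Your exponent bookkeeping for $(q,\tilde r)$ is right. Concerning the one obstacle you flag: rather than trying to interpolate the $s=1$ equivalence of Lemma \ref{equiv-norm-leb}, invoke directly \cite[Theorem 1.2]{kmvzz}, which gives $\||\nabla|^s g\|_{L^p}\lesssim\|\mathcal K_\lambda^{s/2}g\|_{L^p}$ for $0<s<2$ whenever $\frac{s+\kappa}{N}<\frac1p<\min\bigl\{1,\frac{N-\kappa}{N}\bigr\}$; chaining this with the classical embedding $\dot W^{s,\tilde r}\hookrightarrow L^{r}$ yields the embedding $\|g\|_{L^r}\lesssim\|\mathcal K_\lambda^{s/2}g\|_{L^{\tilde r}}$ you need. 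At $p=\tilde r$ one has $\frac{1}{\tilde r}>\frac{N-2+2s}{2N}$ and $\frac{1}{\tilde r}<\frac12$, so the lower constraint reduces to $\kappa<\frac{N-2}{2}=\sqrt{\lambda_N}$ and the upper one to $\kappa<\frac N2$; both hold automatically for every $\lambda>-\lambda_N$ by \eqref{kappa}. Hence no case analysis on the sign of $\lambda$ is actually required, and your proof goes through for the full range stated in the lemma.
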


\begin{lem}
    \label{stri-radial}
Let $N\geq3, s>0$, $\lambda\geq0$, $g$ being spherically symmetric, and $0\in I$ be a real interval. Assume that $(q,r)$ and $(\tilde{q}, \tilde{r})$ are two $s-$admissible pairs in the sense of \eqref{adm-s}. Then, we have
\begin{equation}
    \label{stri-radial-est}
    \left\|\int_0^{t}e^{-i(t-\tau)\mathcal K_\lambda}g(\tau)\,d\tau\right\|_{L^q(I; L^r)}\leq C\|g\|_{L^{\tilde{q}'}(I; L^{\tilde{r}'})}.
\end{equation}
\end{lem}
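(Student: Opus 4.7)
The plan is to derive the estimate from the homogeneous radial Strichartz inequality of Lemma \ref{str-s} via a $TT^\ast$-type bilinear duality, together with the Christ--Kiselev lemma to remove the time truncation.

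First I would introduce the untruncated Duhamel operator
\begin{equation*}
\Psi g(t):=\int_{\R}e^{-i(t-\tau)\mathcal K_\lambda}g(\tau)\,d\tau
=U_\lambda(t)\,\Lambda g,\qquad \Lambda g:=\int_{\R}U_\lambda(-\tau)g(\tau)\,d\tau,
\end{equation*}
with $U_\lambda$ as in \eqref{U-lambda}. For any $s$-admissible pair in the sense of \eqref{adm-s} with $s>0$ one has $q>2>\tilde q'$, so the Christ--Kiselev lemma applies and reduces \eqref{stri-radial-est} on the interval $I$ to the continuity $\Psi:L^{\tilde q'}(\R;L^{\tilde r'})\to L^q(\R;L^r)$; restriction to $I$ is then automatic.

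Next I would prove the $\Psi$-bound by bilinear duality. Pairing $\Psi g$ against a test function $f\in L^{q'}(\R;L^{r'})$ (which may be taken radial since the flow preserves spherical symmetry for $\lambda\ge 0$), one computes
\begin{equation*}
\bigl|\langle \Psi g,f\rangle\bigr|
=\Big|\int_{\R}\bigl\langle g(\tau),U_\lambda(\tau)F\bigr\rangle_{L^2}\,d\tau\Big|,
\qquad F:=\int_{\R}U_\lambda(-t)f(t)\,dt.
\end{equation*}
H\"older in $(\tau,x)$ with exponents $(\tilde q',\tilde r')$ and $(\tilde q,\tilde r)$, followed by Lemma \ref{str-s} for the pair $(\tilde q,\tilde r)$, bounds this by $\|g\|_{L^{\tilde q'}L^{\tilde r'}}\,\|F\|_{\dot H^{s}_\lambda}$. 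The proof would then be completed by the dual bound
\begin{equation*}
\|F\|_{\dot H^{s}_\lambda}\;\lesssim\;\|f\|_{L^{q'}(\R;L^{r'})},
\end{equation*}
which is the dualized form of Lemma \ref{str-s} applied to the pair $(q,r)$.

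The main obstacle I foresee is this last step: the straightforward dualization of Lemma \ref{str-s} supplies an $\dot H^{-s}_\lambda$ bound for $F$, whereas the $\dot H^{s}_\lambda$ bound is what is needed to reapply Lemma \ref{str-s} from the other side. This is precisely where the radial symmetry of $g$ (hence of $F$) and the sign condition $\lambda\ge 0$ become essential. On the radial subspace the spectral decomposition of $\mathcal K_\lambda$ reduces matters to a half-line problem, and one has the sharpened dispersive/restriction bounds of Stein--Tomas--Sogge flavor for $U_\lambda$ adapted to inverse-square potentials, as developed in \cite{bpst,cg,df,msz}. These improved radial dispersive bounds bridge the Sobolev scales $\dot H^{-s}_\lambda$ and $\dot H^{s}_\lambda$ and close the $TT^\ast$ duality loop. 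Once this radial enhancement is invoked, the remainder of the argument is routine H\"older and interpolation, combined with the Christ--Kiselev reduction of Step 1, yielding \eqref{stri-radial-est}.
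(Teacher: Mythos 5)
The paper itself gives no proof of this lemma; it is quoted from the literature \cite{bpst,cg,df,msz}, where the route is essentially the Christ--Kiselev plus $TT^\ast$ scheme you outline. Your Steps 1 and 2 are fine, but the final step is a genuine gap, and it is not one that radial improvements can close: it is a scaling obstruction. If both $(q,r)$ and $(\tilde q,\tilde r)$ are $s$-admissible with the same $s>0$, the two sides of \eqref{stri-radial-est} scale differently --- the exponents miss the scale-invariance relation $\frac2q+\frac2{\tilde q}=N\bigl(\frac12-\frac1r\bigr)+N\bigl(\frac12-\frac1{\tilde r}\bigr)$ by exactly $2s$ --- so no bound of this form can hold with a uniform constant on an unbounded interval for the scale-invariant propagator $e^{-it\mathcal K_\lambda}$ (rescaling $g\mapsto g(\delta^2\cdot,\delta\cdot)$ and letting $\delta\to\infty$ forces $C=\infty$). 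Correspondingly, the estimate $\|F\|_{\dot H^{s}_\lambda}\lesssim\|f\|_{L^{q'}L^{r'}}$ that you need is false for the same reason: dualizing Lemma \ref{str-s} lands in $\dot H^{-s}_\lambda$, and no ``Stein--Tomas--Sogge'' or radial dispersive refinement can recover the missing $2s$ derivatives, since such refinements enlarge the range of Lebesgue exponents but cannot alter the Sobolev scale, which is fixed by dimensional analysis.

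What actually closes the argument (and what is done in \cite[Lemma~2.4]{cg}, which this lemma transcribes) is that the right-hand pair must be $(-s)$-admissible, i.e. $\frac{2}{\tilde q}=N\bigl(\frac12-\frac1{\tilde r}\bigr)+s$. Then the dual of the homogeneous estimate for $(\tilde q,\tilde r)$ reads $\|\Lambda g\|_{\dot H^{s}_\lambda}\lesssim\|g\|_{L^{\tilde q'}L^{\tilde r'}}$, which feeds directly into Lemma \ref{str-s} for the $s$-admissible pair $(q,r)$ and closes your $TT^\ast$ loop with no derivative mismatch; this is also consistent with how the estimate is invoked in Section \ref{inter}, where $(\theta,r)$ is $s_c$-admissible and $(\tilde\theta,r)$ is $(-s_c)$-admissible. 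The hypotheses ``$g$ spherically symmetric'' and ``$\lambda\ge0$'' enter at a different point than you suggest: they guarantee the $L^{1}\to L^{\infty}$ dispersive estimate for $e^{-it\mathcal K_\lambda}$ (see \cite{zheng-18}), hence the homogeneous Strichartz estimates in the extended, non-admissible range needed on both sides of the duality; they do not bridge Sobolev scales. So either you read the lemma with $(\tilde q,\tilde r)$ taken $(-s)$-admissible --- the statement as printed appears to contain a typo --- in which case your skeleton works essentially verbatim, or, taken literally, the statement is not scale-consistent and your proposed proof cannot succeed.
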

We will denote by $\mathbf{c}_0$ the sharp constant for the Strichartz estimate \cite{Alless-EJDE}.

One of the key ingredients in establishing the above Strichartz estimates is the dispersive inequality, stated below, as presented in \cite[Corollary 1.7]{fffp}.
\begin{prop}\label{dsp}
Let $N\ge 2$, $\lambda \ge -\lambda_N$ and $\kappa$ as defined in \eqref{kappa}. Assume that 
\begin{equation}
    \label{cond-r}
     2 \le r < \frac{N}{\kappa_+}\footnote{Hereafter we use the notation $\varsigma_+ :=\max(\varsigma,0)$ with the convention $0^{-1}=\infty$.} \quad \text{or} \quad 2 \le r \le \infty \quad \text{if} \quad \lambda =0.
\end{equation}
Then, we have
\begin{equation}
    \label{dis-est}
    \|U_{\lambda}(t)\varphi\|_{L^{r}}\lesssim |t|^{-N(\frac12-\frac1{r})} \|\varphi\|_{L^{r'}},
\end{equation}
where $U_{\lambda}(t)$ is given by \eqref{U-lambda}.
\end{prop}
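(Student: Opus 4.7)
The plan is to obtain \eqref{dis-est} by Riesz--Thorin interpolation between two endpoint estimates: the trivial $L^2 \to L^2$ bound provided by the unitarity of $e^{-it\mathcal{K}_{\lambda}}$ on $L^2(\R^N)$, and a dispersive $L^1 \to L^\infty$-type bound for the integral kernel of the propagator $U_\lambda(t)$. All of the difficulty is concentrated in the second endpoint: the presence of the inverse-square potential forces one to work with a restricted range of exponents when $\lambda<0$, and this restriction is precisely encoded in the condition $r<N/\kappa_+$ in \eqref{cond-r}.

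The first step is to decompose $L^2(\R^N)$ into spherical harmonic subspaces $\mathcal{H}_\ell$, $\ell\ge 0$. On each $\mathcal{H}_\ell$ the operator $\mathcal{K}_{\lambda}$ reduces to a Bessel-type operator on the half-line with index $\nu_\ell:=\sqrt{(\ell+\frac{N-2}{2})^2+\lambda}$; since $\lambda>-\lambda_N$, every $\nu_\ell$ is real and non-negative. Next I would invoke the explicit Hankel representation of the one-dimensional Schr\"odinger group associated to each such Bessel operator, which yields a kernel of the form
\begin{equation*}
K_t^{(\ell)}(r,\rho)=\frac{1}{2it}\,e^{i(r^2+\rho^2)/(4t)}\,(r\rho)^{-(N-2)/2}\,J_{\nu_\ell}\!\left(\frac{r\rho}{2|t|}\right),
\end{equation*}
a Schr\"odinger analogue of the classical Mehler-type formula. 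Summing over $\ell$ through the spherical-harmonic addition formula would then produce a pointwise estimate
\begin{equation*}
|K_t(x,y)|\lesssim |t|^{-N/2}\,\left(\frac{\min(|x|,|y|)}{\max(|x|,|y|)}\right)^{-\kappa},
\end{equation*}
from which Young's inequality yields an $L^1\to L^\infty$-type decay weighted by this radial factor; interpolation with the unitary $L^2$-bound then produces \eqref{dis-est} in the full range $2\le r<N/\kappa_+$.

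The main obstacle is the uniform-in-$\ell$ control of the Hankel series. One must estimate $J_{\nu_\ell}$ simultaneously in two very different regimes: when $|x||y|/(2|t|)\lesssim \nu_\ell^2$, one exploits the small-argument bound $|J_\nu(z)|\lesssim (z/2)^\nu/\Gamma(\nu+1)$, while when $|x||y|/(2|t|)\gg \nu_\ell^2$, one relies on the sharp oscillatory asymptotics $J_\nu(z)\sim\sqrt{2/(\pi z)}\cos(z-\nu\pi/2-\pi/4)$; the transition region must be handled by a careful stationary-phase analysis, which is the technical heart of the argument in \cite{fffp}. The origin of the radial weight, and hence of the sharp restriction $r<N/\kappa_+$, traces back to the identity $\kappa=\sqrt{\lambda_N}-\nu_0$ and the Bessel behavior $J_{\nu_0}(z)\sim z^{\nu_0}$ near $z=0$: the associated weight is integrable at the origin against $L^{r'}$-inputs precisely when $r<N/\kappa_+$. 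Once the pointwise kernel estimate is in hand, the conclusion \eqref{dis-est} follows immediately, and the case $\lambda=0$ (where $\kappa_+=0$) recovers the classical free dispersive inequality for all $2\le r\le \infty$.
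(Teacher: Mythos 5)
The paper offers no proof of this proposition: it is quoted directly from \cite[Corollary 1.7]{fffp}, so the only meaningful comparison is with the argument in that reference. Your sketch identifies the right circle of ideas (spherical-harmonic decomposition, Bessel/Hankel kernels $K_t^{(\ell)}$ on each sector, interpolation with the unitary $L^2$ bound), but the load-bearing step is both misstated and unproved. The asserted pointwise bound $|K_t(x,y)|\lesssim |t|^{-N/2}\bigl(\min(|x|,|y|)/\max(|x|,|y|)\bigr)^{-\kappa}$ is already false for the radial ($\ell=0$) component when $\lambda<0$: inserting the small-argument asymptotics $J_{\nu_0}(z)\sim (z/2)^{\nu_0}/\Gamma(\nu_0+1)$, $\nu_0=\sqrt{\lambda_N+\lambda}$, into your own kernel formula gives, in the regime $|x||y|\ll|t|$, the behavior $|K_t^{(0)}|\sim |t|^{-N/2}\bigl(|x||y|/|t|\bigr)^{-\kappa}$, which diverges as $|x|=|y|\to 0$ while your ratio weight equals $1$ there. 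The true weight is time-dependent; this is precisely why the known weighted $L^\infty$--$L^1$ estimate \eqref{dis-est-3d} carries the extra factor $(1+|t|^{\kappa})$ and, moreover, is only established for spherically symmetric data. A clean pointwise bound for the full non-radial kernel of the form you claim is not available and would improve on the state of the art recorded in the remark following the proposition (recall that the unweighted $L^\infty$--$L^1$ estimate is known to \emph{fail} for $\lambda<0$ by \cite{bpst}).

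Two further gaps remain even if one repairs the kernel bound. First, the uniform-in-$\ell$ summation of the Hankel series against the zonal harmonics, which you defer to ``a careful stationary-phase analysis,'' is not a technical footnote but the substance of the theorem; without it there is no kernel estimate to interpolate. Second, passing from a \emph{weighted} $L^1\to L^\infty$ bound and the $L^2$ bound to the unweighted $L^{r'}\to L^{r}$ estimate \eqref{dis-est} on the range $2\le r<N/\kappa_+$ is not plain Riesz--Thorin: one must trade the singular weights $|x|^{-\kappa}$, $|y|^{-\kappa}$ against the Lebesgue exponents (a Stein--Weiss/weighted-interpolation step), and it is there that the threshold $r<N/\kappa_+$ actually enters. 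Your integrability heuristic for that threshold is correct, but the step is not carried out. In short: right skeleton, but the proposal does not establish the proposition, and the citation of \cite{fffp} remains necessary.
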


\begin{rem}
\rm
~
\begin{itemize}
    \item[($i$)] For the case $\lambda =0$, the  dispersive estimate \eqref{dis-est} is known for all $2 \le r \le \infty$ since it concerns the free Schr\"odinger propagator.
    \item[(ii)] In the case $\lambda \neq 0$, the $L^{\infty}-L^1$ dispersive estimate is known to fail at least for $\lambda < 0$; see \cite[Remark 1, p. 530]{bpst}. However, in the radial setting, the $L^{\infty}-L^1$ dispersive estimate holds true for all $\lambda \ge 0$; see \cite[Theorem 2.8]{zheng-18}.
    \item[(iii)] For $-\lambda_N < \lambda < 0$ and $\kappa$ as given by \eqref{kappa},   a weighted $L^{\infty}-L^1$ dispersive estimate is obtained as follows \cite[Theorem 2.8]{zheng-18}:
\begin{equation}
    \label{dis-est-3d}
    \left\|(1+|x|^{-\kappa})^{-1}U_{\lambda}(t)\varphi \right\|_{L^{\infty}(\R^N)}\lesssim  t^{-\frac{N}2} (1+|t|^\kappa)\left\|(1+|x|^{-\kappa}) \varphi\right\|_{L^{1}(\R^N)},
\end{equation}
provided that $\varphi$ is spherically symmetric. 
 \item[(iv)] In all cases we have $\frac{2N}{N-2}<\frac{N}{\kappa_+}$.
  \item[(v)]  Note that for $\alpha, \beta>0$,
\begin{equation}
\label{Oper-power}
{\mathcal K_{\lambda}}^\alpha\,{\mathcal K_{\lambda}}^\beta={\mathcal K_{\lambda}}^{\alpha+\beta};
\end{equation}
see e.g. \cite{Martinez} for the precise conditions about the validity of \eqref{Oper-power}.
\end{itemize}
\end{rem}

Now, we state a refined Strichartz estimate which is useful in our analysis.
\begin{lem}
\label{strichartz}
Let $0<T\le \infty, \ 2 <r<\frac{2N}{N-2}$ and $(\theta, \tilde{\theta}) \in (1, \infty)$ such that
\begin{equation}
\label{theta-thetatild}
\frac{1}{\theta} + \frac{1}{\tilde{\theta}} = N \left(\frac12-\frac1r\right).
\end{equation} 
Then, one has
\begin{equation}
\label{lorentz-est}
\|\Phi_{\ba, \lambda}[f](t)\|_{L^{r}} \ \le \ C \int_0^T |t-\tau|^{-N \left(\frac12-\frac1r\right)}\|f(\tau)\|_{L^{r'}} \, d\tau,
\end{equation}
where $C=C(r,\theta,N)$ is a positive constant.
\end{lem}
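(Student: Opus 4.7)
The plan is to recognize this as essentially a pointwise-in-time dispersive bound applied to the Duhamel formula, exploiting the nonnegativity of the damping to discard the damping factor. The role of the auxiliary pair $(\theta,\tilde\theta)$ is only to guarantee that the exponent $N(\tfrac12-\tfrac1r)$ lies in $(0,1)$; it is a consistency/localizability condition that will be invoked later through a Hardy--Littlewood--Sobolev inequality in time.

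First, I would isolate the damping factor inside the propagator. Arguing exactly as in \eqref{U-a}, but comparing the damped and undamped Cauchy problems started at time $\tau$ rather than at $0$, one obtains
\begin{equation*}
U_{\ba,\lambda}(t-\tau)=\exp\bigl(-(\baa(t)-\baa(\tau))\bigr)\,U_{\lambda}(t-\tau),\qquad 0\le\tau\le t,
\end{equation*}
because the damping $\ba(t)$ is a scalar multiplier and therefore contributes a purely multiplicative exponential decay to the linear flow. Since $\ba\ge 0$, the quantity $\baa(t)-\baa(\tau)=\int_\tau^{t}\ba(s)\,ds$ is nonnegative for $\tau\le t$, and hence
\begin{equation*}
\bigl|\exp\bigl(-(\baa(t)-\baa(\tau))\bigr)\bigr|\le 1.
\end{equation*}

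Second, I would verify that the dispersive estimate of Proposition \ref{dsp} applies to the exponent $r$ appearing here. The hypothesis $2<r<\tfrac{2N}{N-2}$ combined with Remark~2.2 (iv), which asserts $\tfrac{2N}{N-2}<\tfrac{N}{\kappa_+}$, places $r$ in the admissible range \eqref{cond-r}. Consequently,
\begin{equation*}
\|U_{\lambda}(t-\tau)f(\tau)\|_{L^{r}}\ \le\ C(N,r)\,|t-\tau|^{-N(\frac12-\frac1r)}\,\|f(\tau)\|_{L^{r'}}.
\end{equation*}

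Third, I would combine these ingredients with Minkowski's inequality applied to the Duhamel representation \eqref{Phi}:
\begin{equation*}
\|\Phi_{\ba,\lambda}[f](t)\|_{L^{r}}\ \le\ \int_0^{t}\bigl|\exp\bigl(-(\baa(t)-\baa(\tau))\bigr)\bigr|\;\|U_{\lambda}(t-\tau)f(\tau)\|_{L^{r}}\,d\tau,
\end{equation*}
and then enlarge the integration interval from $[0,t]$ to $[0,T]$ (legitimate because the integrand is nonnegative) to obtain the desired inequality. The condition on $(\theta,\tilde\theta)$ enters only to certify $0<N(\tfrac12-\tfrac1r)<1$, which is exactly the range where the kernel $|t-\tau|^{-N(\frac12-\frac1r)}$ is locally integrable in a neighborhood of the diagonal, hence where HLS-type convolution estimates become available in subsequent arguments.

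The steps above are essentially routine once the damped propagator is factored. The only point that requires care is the verification that the range $2<r<\tfrac{2N}{N-2}$ falls within the admissibility region for the dispersive inequality when $\lambda<0$, which relies on the comparison $\tfrac{2N}{N-2}<\tfrac{N}{\kappa_+}$ noted in Remark~2.2 (iv); this is the only place where the precise structure of the inverse-square potential intervenes, and it is the main (minor) obstacle.
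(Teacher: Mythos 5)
Your argument is correct and is precisely the intended one: the paper states this lemma without proof, and the natural derivation is exactly what you give --- factor out the damping exponential (which has modulus at most one since $\ba\ge 0$), apply the dispersive estimate of Proposition \ref{dsp} (valid here because $2<r<\frac{2N}{N-2}<\frac{N}{\kappa_+}$), and conclude by Minkowski's integral inequality, enlarging the integration range from $[0,t]$ to $[0,T]$. One small aside: the hypothesis on $(\theta,\tilde\theta)$ by itself only forces $N\left(\frac12-\frac1r\right)<2$ rather than $<1$; the local integrability exponent $N\left(\frac12-\frac1r\right)<1$ follows directly from $r<\frac{2N}{N-2}$, and, exactly as you observe, the pair $(\theta,\tilde\theta)$ plays no role in \eqref{lorentz-est} itself.
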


\begin{rem}
    \rm
~
Let $N\ge 3,\, 0<b <2$ and $ 1+\frac{4-2b}{N}<p<1+\frac{4-2b}{N-2}$. Define 
\begin{equation}
\label{theta-r-q}
\left\{
\begin{aligned}
    \theta &= \frac{2(p+1)(p-1)}{4 - 2 b - (N-2)(p-1)},\\
    r &= \frac{p+1}{1-\frac{b}{N}}, \qquad q =\frac{4(1+p)}{2 b+N(p-1)}.
    \end{aligned}
    \right.
\end{equation}
One can easily see that $\theta>\max(1, p-1)$, $2<r<\frac{2N}{N-2}$, $(q,r)$ is admissible in the sense of \eqref{adm}. Furthermore, the pair $(\theta,r)$ is $s_c-$admissible following \eqref{adm-s}, where $s_c$ is given by \eqref{s-c}. 
Moreover, by \eqref{theta-thetatild}, we have
\begin{equation}
    \label{p-theta}
    \tilde{\theta}' (p+1)=\theta,
\end{equation}
and
\begin{equation}
    \label{q-theta}
    \frac{1}{q'}=\frac{1}{q}+\frac{p-1}{\theta},
\end{equation}
where $\tilde{\theta}$ is given by \eqref{theta-thetatild}.
\end{rem}

As in \cite{Guzman-17}, we introduce the following numbers that will be useful in the proof of global existence for small data, namely
\begin{equation}
    \label{q-r-star}
    \begin{split}
    q_* &= \frac{4(p-1)(p+1-\eta)}{(p-1)(N(p+1)+2b-2N)-\eta(N(p+1)+2b-2N-4)}, \\
    r_* &= \frac{N(p-1)(p+1-\eta)}{(p-1)(N-b)-\eta(2-b)},
    \end{split}
\end{equation}
where $0<\eta\ll1$ is small enough to be chosen later on.\\
Note that the pair $(q_*, r_*)$ is admissible in the sense of Definition \ref{df}. We borrow without proof from \cite{cg} (using our notations) the following lemma.
\begin{lem} (\cite[Lemma 4.4]{cg})
    \label{lem-4.4}
    Let $N\ge 3$, $0<b<\min(N/2,2)$, $\lambda >0$, $1+\frac{4-2b}{N}<p<1+\frac{4-2b}{N-2}$ and $(q,r, \theta)$ as defined in \eqref{theta-r-q}. Then, there exists $\eta \in (0,p-1)$ such that
    \begin{itemize}
        \item[$(i)$]  $\left\|\sqrt{\mathcal{K}_\lambda} \left(|x|^{-b} |u|^{p-1}u\right) \right\|_{L^{q_*'}(L^{r_*'})} \lesssim  \|u\|^\eta_{L^\infty (H^1_\lambda)} \|\sqrt{\mathcal{K}_\lambda} u\|_{L^q (L^r)} \|u\|^{p-1-\eta}_{L^\theta (L^r)}$ for $N\ge 4$.$\vspace{.2cm}$
        \item[$(ii)$]  $\left\|\sqrt{\mathcal{K}_\lambda} \left(|x|^{-b} |u|^{p-1}u\right) \right\|_{L^{2}(L^{6/5})} \lesssim  \|u\|^\eta_{L^\infty (H^1_\lambda)} \|\sqrt{\mathcal{K}_\lambda} u\|_{L^q (L^r)} \|u\|^{p-1-\eta}_{L^\theta (L^r)}$ for $N=3$ and $p<4-2b$.
    \end{itemize}
\end{lem}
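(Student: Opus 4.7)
The plan is to reduce everything to a $\nabla$-based estimate via the norm equivalence of Lemma \ref{equiv-norm-leb}, apply the pointwise Leibniz rule, absorb the singular piece through an $L^r$ Hardy inequality, and close with a weighted H\"older argument in space followed by a H\"older-in-time interpolation calibrated by the definition of $(q_*,r_*)$ in \eqref{q-r-star}.

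First I would check that the exponents $r_*$, $r_*'$ (and $6/5$, $6$ for case (ii)) all lie in the range $(1,N)$, so that under the hypothesis $\lambda\ge 0$ the estimate \eqref{equiv-norm-id} applies on both sides of the desired inequality. This reduces the claim to bounding $\|\nabla\bigl(|x|^{-b}|u|^{p-1}u\bigr)\|_{L^{q_*'}(L^{r_*'})}$ in terms of $\|\nabla u\|_{L^q(L^r)}$, $\|u\|^{p-1-\eta}_{L^\theta(L^r)}$, and $\|u\|^\eta_{L^\infty(H^1_\lambda)}$. Pointwise Leibniz yields
\begin{equation*}
\bigl|\nabla\bigl(|x|^{-b}|u|^{p-1}u\bigr)\bigr|\;\lesssim\;|x|^{-b}|u|^{p-1}|\nabla u|\;+\;b\,|x|^{-b}|u|^{p-1}\bigl(|x|^{-1}|u|\bigr),
\end{equation*}
and the singular contribution on the right is absorbed into the regular one via the $L^r$-Hardy inequality $\||x|^{-1}u\|_{L^r}\lesssim\|\nabla u\|_{L^r}$ (available because $1<r<N$). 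Thus it suffices to control $\bigl\||x|^{-b}|u|^{p-1}|\nabla u|\bigr\|_{L^{q_*'}(L^{r_*'})}$, and analogously in $L^2(L^{6/5})$ for case (ii).

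To dispense with the weight I would rewrite the integrand as $\bigl(|x|^{-b}|u|^{\eta}\bigr)\,|u|^{p-1-\eta}\,|\nabla u|$ and apply spatial H\"older with $|\nabla u|\in L^r$, $|u|^{p-1-\eta}\in L^r$, and the first factor in a third Lebesgue exponent $m$. To estimate the weighted factor uniformly in time I split $\R^N=B(0,1)\cup B(0,1)^c$: inside the unit ball, combine $|x|^{-b}\in L^\gamma_{\rm loc}$ for every $\gamma<N/b$ with the Sobolev embedding $H^1_\lambda\hookrightarrow L^s$ applied to $|u|^\eta$; outside, use $|x|^{-b}\lesssim 1$ together with the same embedding. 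Both pieces produce the factor $\|u\|^\eta_{L^\infty(H^1_\lambda)}$. Integration in time via H\"older then uses the identity $\tfrac{1}{q_*'}=\tfrac{1}{q}+\tfrac{p-1-\eta}{\theta}$, cf.\ \eqref{q-theta}, which is precisely how $(q_*,r_*)$ in \eqref{q-r-star} is calibrated. For $N=3$ one instead targets the endpoint Strichartz exponent $(2,6/5)$, and the extra hypothesis $p<4-2b$ is what keeps the resulting H\"older exponent on $|u|^{p-1-\eta}$ inside the Sobolev range.

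\textbf{Main obstacle.} The technical heart is the simultaneous calibration of $\eta\in(0,p-1)$: it has to be positive so that a genuine $L^\infty H^1_\lambda$ factor is peeled off, small enough that the H\"older exponent applied to $|x|^{-b}$ stays strictly below $N/b$ (local integrability of the weight), and so that the time-exponent identity closes against the prescribed $q_*$; in parallel, $|u|^{p-1-\eta}$ must remain in a Sobolev-admissible Lebesgue exponent. These three conditions are compatible exactly in the inter-critical window $1+\tfrac{4-2b}{N}<p<1+\tfrac{4-2b}{N-2}$ for $N\ge 4$, and only under the further restriction $p<4-2b$ when $N=3$; verifying that the admissible window of $\eta$ is non-empty is the combinatorially delicate step of the argument.
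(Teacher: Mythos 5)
First, a point of comparison that matters here: the paper does not prove this lemma at all — it is imported verbatim (``We borrow without proof from \cite{cg} \dots'') from \cite[Lemma 4.4]{cg}, so there is no in-paper argument to measure yours against. That said, your overall skeleton — reduce $\sqrt{\mathcal{K}_\lambda}$ to $\nabla$ via Lemma \ref{equiv-norm-leb}, apply the Leibniz rule, absorb the $|x|^{-b-1}|u|^{p}$ term by the $L^\rho$ Hardy inequality, then run a weighted H\"older argument with a unit-ball splitting and a H\"older step in time — is indeed the method of the cited reference, so the strategy is the right one.

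Two concrete steps, however, do not survive scrutiny. The first is the time-exponent calibration, which you correctly identify as the heart of the proof but then assert rather than verify: the identity $\tfrac{1}{q_*'}=\tfrac{1}{q}+\tfrac{p-1-\eta}{\theta}$ is \emph{false} for the exponents defined in \eqref{theta-r-q} and \eqref{q-r-star}. For instance, with $N=4$, $b=1$, $p=\tfrac74$, $\eta=\tfrac14$ one computes $q=\tfrac{11}{5}$, $\theta=\tfrac{33}{4}$, $q_*=\tfrac{15}{7}$, so that $\tfrac1q+\tfrac{p-1-\eta}{\theta}=\tfrac{17}{33}$ while $\tfrac{1}{q_*'}=\tfrac{8}{15}\neq\tfrac{17}{33}$. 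Since the estimate is needed on the unbounded interval $(0,\infty)$ (this is how Lemma \ref{lem-4.4} enters Proposition \ref{prop3}), H\"older in time must match exponents exactly; your distribution of factors places the nonlinearity in $L^{33/17}_t$, which does not embed into $L^{q_*'}_t=L^{15/8}_t$ on $\R_+$. So the bookkeeping that actually produces the pair $(q_*,r_*)$ of \eqref{q-r-star} must distribute the powers of $u$ differently from what you propose, and as written your argument does not close. The second issue is the preliminary reduction in dimension three: Lemma \ref{equiv-norm-leb} gives $\|\nabla u\|_{L^\rho}\lesssim\|\sqrt{\mathcal{K}_\lambda}\,u\|_{L^\rho}$ for $\lambda>0$ only when $1<\rho<N$, and for $N=3$ the exponent $r=\tfrac{3(p+1)}{3-b}$ exceeds $3$ throughout the inter-critical window (indeed $r<3$ would force $p<2-b<1+\tfrac{4-2b}{3}$), so the $\|\nabla u\|_{L^q(L^r)}$ factor produced by your reduction cannot be converted back into $\|\sqrt{\mathcal{K}_\lambda}\,u\|_{L^q(L^r)}$ by that lemma. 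This failure of the norm equivalence at the exponent $r$ — not a Sobolev-admissibility constraint on $|u|^{p-1-\eta}$ — is the obstruction behind the separate case $(ii)$, the target space $L^2(L^{6/5})$, and the extra hypothesis $p<4-2b$; your sketch does not engage with it.
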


The following Sobolev embedding will be useful to our purpose:
\begin{equation}
    \label{Sob-Lore-Emb}
    H^s(\R^N) \hookrightarrow L^{r}(\R^N), \,\, s>0,\,\, \frac{1}{2}-\frac{s}{N}\leq \frac{1}{r}\leq \frac{1}{2},\,\, r<\infty.
\end{equation}
In what follows, we present the continuity (or bootstrap) argument, which will be useful in subsequent analysis.
\begin{lem}\cite[Lemma 3.7, p. 437]{Strauss}
\label{boots}
Let $I\subset\R$ be a time interval, and $\mathbf{X} : I\to [0,\infty)$ be a continuous function satisfying, for every $t\in I$,
\begin{equation*}
		\label{boots1}
		    	\mathbf{X}(t) \leq a + b [\mathbf{X}(t)]^\theta,
		\end{equation*}
	where $a,b>0$ and $\theta>1$ are constants. Assume that, for some $t_0\in I$,
		\begin{equation*}
		\label{boots2}
	\mathbf{X}(t_0)\leq a, \quad a\,b^{\frac{1}{\theta-1}} <(\theta-1)\,\theta^{\frac{\theta}{1-\theta}}.
				\end{equation*}
		Then, for every $ t\in I$, we have
		\begin{equation*}
		\label{boots3}
		    	\mathbf{X}(t)\leq  \frac{\theta\,a}{\theta-1}.
		\end{equation*}
\end{lem}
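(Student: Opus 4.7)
The plan is a direct continuity (bootstrap) argument built around the auxiliary polynomial $f(x) := a + bx^\theta - x$ on $[0,\infty)$. The hypothesis $\mathbf{X}(t) \le a + b\mathbf{X}(t)^\theta$ is precisely $f(\mathbf{X}(t)) \ge 0$. Since $f(0) = a > 0$ and $f(x) \to +\infty$ as $x \to \infty$, while $f'(x) = b\theta x^{\theta - 1} - 1$ vanishes at the unique point $x^{*} := (b\theta)^{-1/(\theta - 1)}$, the graph of $f$ is of the form ``positive–negative–positive'' exactly when $f(x^{*}) < 0$. A short computation gives
\begin{equation*}
f(x^{*}) = a - (\theta - 1)\,\theta^{-\theta/(\theta - 1)}\, b^{-1/(\theta - 1)},
\end{equation*}
so that the smallness assumption $a\, b^{1/(\theta - 1)} < (\theta - 1)\,\theta^{\theta/(1 - \theta)}$ is literally equivalent to $f(x^{*}) < 0$. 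Consequently $f$ admits two positive roots $0 < x_1 < x^{*} < x_2$, and $\{f \ge 0\} \cap [0, \infty) = [0, x_1] \cup [x_2, \infty)$.

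Next, since $\mathbf{X}$ is continuous on the connected interval $I$ and the orbit $\mathbf{X}(I)$ lies entirely in the disjoint union $[0, x_1] \cup [x_2, \infty)$, it must be contained in a single connected component. To rule out $[x_2, \infty)$, I use the initialization $\mathbf{X}(t_0) \le a$ combined with the inequality $a < x_1$, which is immediate from $f(a) = b a^\theta > 0$ together with the elementary check that $a < x^{*}$ (itself a consequence of the smallness hypothesis, since $(\theta-1)\theta^{-\theta/(\theta-1)} \le \theta^{-1/(\theta-1)}$). Hence $\mathbf{X}(I) \subset [0, x_1]$, and in particular $\mathbf{X}(t) \le x_1$ for every $t \in I$.

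It remains to bound $x_1$ by $M := \theta a/(\theta - 1)$. I argue by contradiction: assume $\mathbf{X}(t_1) > M$ for some $t_1 \in I$. Since $\mathbf{X}(t_0) \le a < M$ and $\mathbf{X}$ is continuous, the intermediate value theorem produces $t_{*}$ with $\mathbf{X}(t_{*}) = M$; substituting into the functional inequality gives $M \le a + bM^\theta$, that is, $bM^\theta \ge M - a = a/(\theta - 1)$. Plugging in the value of $M$ and rearranging produces $b\,\theta^\theta a^{\theta - 1} \ge (\theta - 1)^{\theta - 1}$, which upon raising to the power $1/(\theta - 1)$ becomes exactly $a\, b^{1/(\theta - 1)} \ge (\theta - 1)\,\theta^{\theta/(1 - \theta)}$, contradicting the strict hypothesis. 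Thus $\mathbf{X}(t) \le M$ throughout $I$. The only genuinely delicate point in the whole argument is this last algebraic rearrangement with fractional exponents: it must be done carefully so that the threshold produced matches verbatim the one in the statement. Everything else (the shape of $f$, and the connectedness step) is automatic.
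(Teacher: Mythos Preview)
Your argument is correct. Note, however, that the paper does not supply its own proof of this lemma: it is stated with a citation to \cite[Lemma~3.7, p.~437]{Strauss} and used as a black box, so there is no ``paper's proof'' to compare against. What you have written is the standard continuity/bootstrap argument, and all the computations check out (in particular $f(x^{*})<0$ is equivalent to the smallness hypothesis, and $a<x^{*}$ follows from $(\theta-1)\theta^{-\theta/(\theta-1)}<\theta^{-1/(\theta-1)}$, i.e.\ $\theta-1<\theta$).

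One stylistic remark: your proof contains two logically independent arguments, and either one alone already yields the conclusion. Once you establish $\mathbf{X}(I)\subset[0,x_1]$, the bound $x_1\le M$ follows directly from $f(M)<0$ (which is exactly the smallness hypothesis after the same algebra you perform in the last paragraph), so the intermediate-value contradiction step is redundant. Conversely, the contradiction argument in your final paragraph is self-contained and does not require the analysis of the roots $x_1,x_2$ at all: $\mathbf{X}(t_0)\le a<M$, and if $\mathbf{X}$ ever exceeded $M$ it would have to hit $M$, producing the contradiction. Keeping both is harmless but you could trim to one for economy.
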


We will also make essential use of the following Gagliardo-Nirenberg inequality \cite[Lemma 3.2]{cg}. The similar version for $\lambda=0$  has been established in various forms in the literature \cite{Far, Genoud2012, pz}.
\begin{lem}\label{gag} \cite[Lemma 3.2]{cg}) 
Let $N\geq3$, $0<b<2$, $\lambda>-\lambda_N$ and $1<p<1+\frac{2(2-b)}{N-2}$. Then, it holds that
\begin{align}\label{ineq}
\int_{\R^N}|u(x)|^{p+1}|x|^{-b}\,dx\leq \mathtt{K}_{opt} \|u\|^{p+1-\nu}\|\sqrt{\mathcal K_\lambda} u\|^{\nu},\quad \text{for all} \ u\in H^1,
\end{align}
where  $\nu$ is given by 
\begin{equation}
\label{AB}
\nu:=\frac{N}{2}(p-1)+b.
\end{equation}
Moreover, the sharp constant $\mathtt{K}_{opt}$ is given by
\begin{equation}\label{part3}
\mathtt{K}_{opt} =\frac{p+1}{p+1-\nu}\Big(\frac{p+1-\nu}{\nu}\Big)^\frac{\nu}2\|\sQ\|^{-(p-1)},
\end{equation}
where $\sQ$ is a ground state solution to 
\begin{align}\label{grd}
\mathcal{K}_\lambda \sQ+|x|^{-b}|\sQ|^{p-1}\sQ=\sQ,\quad 0\neq\sQ\in H^1_\lambda.
\end{align}
Furthermore, the Pohozaev type identities hold
\begin{align}
    \|\sqrt{\mathcal K_\lambda}\sQ\|^2=  \frac{\nu}{p+1-\nu}\|\sQ\|^2=\frac{\nu}{p+1}\int_{\R^N} |\sQ(x)|^{p+1} |x|^{-b}dx.\label{poh}
\end{align}
\end{lem}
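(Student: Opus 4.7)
The plan is to establish the inequality and the sharp constant simultaneously via a Weinstein--type variational argument. Introduce the scaling-invariant functional
\[
J(u)\;:=\;\frac{\|u\|^{p+1-\nu}\,\|\sqrt{\mathcal K_\lambda}\,u\|^{\nu}}{\displaystyle\int_{\R^N}|x|^{-b}|u(x)|^{p+1}\,dx},\qquad u\in H^1\setminus\{0\},
\]
so that \eqref{ineq} is equivalent to $\mathtt{K}_{opt}^{-1}:=\inf_{u\neq 0} J(u)>0$. A direct computation shows that $J(\alpha\,u(\beta\,\cdot))=J(u)$ for every $\alpha,\beta>0$; the two-parameter invariance forces $\nu$ to take the value \eqref{AB} and is the structural reason for the upper bound $p<1+\tfrac{2(2-b)}{N-2}$.

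The first step is to show $\inf J>0$, i.e.\ that \eqref{ineq} holds with \emph{some} finite constant. By the norm equivalence \eqref{norm-equiv} this reduces to a Gagliardo--Nirenberg estimate with $\|\nabla u\|$ in place of $\|\sqrt{\mathcal K_\lambda}u\|$, which one obtains from a weighted H\"older decomposition of $|x|^{-b}$ (splitting into $|x|<1$ and $|x|\geq 1$), Hardy's inequality \eqref{Hardy} to tame the singularity at the origin, the Sobolev embedding $H^1\hookrightarrow L^{p+1}$ (valid thanks to the upper bound on $p$), and interpolation against $L^2$. Next I would take a minimizing sequence $\{u_n\}$ for $J$ and, exploiting the scaling invariance, normalize $\|u_n\|=\|\sqrt{\mathcal K_\lambda}u_n\|=1$, making $\{u_n\}$ bounded in $H^1_\lambda$.

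The main obstacle is the compactness of $\{u_n\}$. For $\lambda\leq 0$, Schwarz symmetrization $u\mapsto u^*$ decreases $\|\nabla u\|$ (P\'{o}lya--Szeg\H{o}) and increases $\int|x|^{-2}|u|^2$ (Hardy--Littlewood, since $|x|^{-2}$ is radially decreasing); because $\lambda\leq 0$ these combine to give $\|\sqrt{\mathcal K_\lambda}u^*\|\leq\|\sqrt{\mathcal K_\lambda}u\|$, while $\int|x|^{-b}|u^*|^{p+1}\geq\int|x|^{-b}|u|^{p+1}$, so that $J(u^*)\leq J(u)$ and one may take the $u_n$ radial. For $\lambda>0$ this symmetrization argument fails, and I would instead minimize $J$ on $H^1_{\mathrm{rad}}$ from the outset, recovering the full infimum \emph{a posteriori} from the radial symmetry of the minimizer. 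In either case, the compact embedding $H^1_{\mathrm{rad}}(\R^N)\hookrightarrow L^{p+1}(\R^N)$ for $2<p+1<\tfrac{2N}{N-2}$, combined with the extra decay afforded by the weight $|x|^{-b}$ at infinity and controlled by Hardy near the origin, yields strong convergence of $\int|x|^{-b}|u_n|^{p+1}\,dx$, so that a nontrivial weak $H^1$ limit $\sQ$ is an actual minimizer.

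Finally, the Euler--Lagrange equation $J'(\sQ)=0$, after differentiating the three factors of $J$, reads
\[
\nu\,C\,B\,\mathcal K_\lambda\sQ\,+\,(p+1-\nu)\,A\,B\,\sQ\;=\;(p+1)\,C\,A\,|x|^{-b}|\sQ|^{p-1}\sQ,
\]
where $A:=\|\sqrt{\mathcal K_\lambda}\sQ\|^2$, $B:=\int|x|^{-b}|\sQ|^{p+1}\,dx$, $C:=\|\sQ\|^2$. Using the two-parameter scaling to normalize the coefficients recovers the ground-state equation \eqref{grd}. Testing \eqref{grd} against $\bar\sQ$ and against the Pohozaev multiplier $x\cdot\nabla\bar\sQ$, together with the standard integration-by-parts identities for $\mathcal K_\lambda$ and for the weighted nonlinearity $|x|^{-b}|u|^{p-1}u$, produces a linear system in $A$ and $B$ whose solution is exactly \eqref{poh}. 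Substituting \eqref{poh} back into $J(\sQ)$ collapses the ratio to
\[
J(\sQ)\;=\;\frac{p+1-\nu}{p+1}\Bigl(\frac{\nu}{p+1-\nu}\Bigr)^{\!\nu/2}\|\sQ\|^{p-1},
\]
whose reciprocal is precisely the formula \eqref{part3} for $\mathtt{K}_{opt}$.
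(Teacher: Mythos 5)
The paper does not prove this lemma; it imports it verbatim from \cite[Lemma 3.2]{cg}, so your proposal is being measured against the Campos--Guzm\'an argument rather than anything in the text. Your overall scheme -- the Weinstein quotient $J$, the two-parameter scaling that fixes $\nu=\frac N2(p-1)+b$, the Euler--Lagrange equation, and the derivation of \eqref{poh} and \eqref{part3} by testing the normalized equation against $\bar\sQ$ and against the dilation generator -- is the standard and correct route, and your final algebra checks out: with $A=\frac{\nu}{p+1-\nu}C$ and $B=\frac{p+1}{p+1-\nu}C$ one indeed gets $J(\sQ)^{-1}=\mathtt{K}_{opt}$ as in \eqref{part3}. (Incidentally, your own Euler--Lagrange computation shows the normalized equation is $\mathcal K_\lambda\sQ+\sQ=|x|^{-b}|\sQ|^{p-1}\sQ$; the sign of the nonlinearity in \eqref{grd} as printed is inconsistent with \eqref{poh}, which is a typo in the statement, not an error of yours.)

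The genuine gap is the existence of a minimizer when $\lambda>0$. Your symmetrization step is valid only for $\lambda\le 0$, and the proposed repair for $\lambda>0$ -- minimize over $H^1_{\mathrm{rad}}$ and ``recover the full infimum a posteriori from the radial symmetry of the minimizer'' -- is circular: restricting to radial functions a priori only produces $\inf_{\mathrm{rad}}J\ge\inf J$, and you give no mechanism for closing that inequality once rearrangement is unavailable. The fix (and the one underlying \cite{cg}, following Farah and Genoud for $\lambda=0$) is that no symmetry is needed at all: because $b>0$ the functional $u\mapsto\int_{\R^N}|x|^{-b}|u|^{p+1}\,dx$ is weakly sequentially continuous on bounded subsets of $H^1$. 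Indeed, for a normalized minimizing sequence the tail $\int_{|x|>R}|x|^{-b}|u_n|^{p+1}\,dx\le R^{-b}\|u_n\|_{L^{p+1}}^{p+1}$ is uniformly small, the near-origin piece is uniformly small by H\"older since $p+1<\frac{2(N-b)}{N-2}$ leaves integrability to spare, and the middle annulus converges by Rellich--Kondrachov; weak lower semicontinuity of the two (equivalent Hilbertian) norms in the numerator then shows the weak limit is a nonzero minimizer. Replacing your symmetrization paragraph by this observation closes the gap and in fact simplifies the proof, since the $\lambda\le0$ and $\lambda>0$ cases no longer need to be separated.
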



We recall the following fractional Hardy inequality which will be used, among others, in proving the local existence solution of \eqref{DINLS}.
\begin{lem}\label{FHI}(\cite[Theorem 3.1]{Haj})
    Let $1<q<\infty$ and $0<s<\frac{N}{q}$. Then we have
\begin{equation}
    \left\|\frac{u}{|x|^{s}} \right\|_{L^q} \lesssim \|u\|_{\dot{W}^{s,q}}.
\end{equation}
In particular, we have
\begin{equation}
    \label{Hardy-Lorentz}
    \left\| \frac{u}{|x|}\right\|_{L^{q}} \lesssim \left\| \nabla u\right\|_{L^{q}},\quad\mbox{whenever}\quad 1<q<N.
\end{equation}
\end{lem}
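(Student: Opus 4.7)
The plan is to reduce the fractional Hardy inequality to a weighted $L^q$ boundedness of the Riesz potential, then establish the latter via a dyadic analysis around $|x|$. The starting observation is that, for $u$ smooth and compactly supported, one has the pointwise representation
\begin{equation*}
u(x) = \gamma_{N,s}^{-1} \int_{\mathbb{R}^N} \frac{(-\Delta)^{s/2}u(y)}{|x-y|^{N-s}}\,dy = c_{N,s}\, I_s\bigl((-\Delta)^{s/2}u\bigr)(x),
\end{equation*}
where $I_s$ denotes the Riesz potential of order $s$. Since $\|(-\Delta)^{s/2}u\|_{L^q} \sim \|u\|_{\dot W^{s,q}}$ for $1<q<\infty$, it suffices to prove the Stein--Weiss type estimate
\begin{equation*}
\bigl\| \, |x|^{-s}\, I_s f \, \bigr\|_{L^q(\mathbb{R}^N)} \ \lesssim \ \|f\|_{L^q(\mathbb{R}^N)}, \quad 1<q<\infty,\ 0<s<\tfrac{N}{q}.
\end{equation*}

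I would then establish this bound by splitting, for each fixed $x\neq 0$, the defining integral of $I_s f(x)$ into three regions. In the near-origin region $|y|<|x|/2$, one has $|x-y|\sim |x|$, so $|x|^{-s}I_s f(x) \lesssim |x|^{-N}\int_{|y|<|x|/2}|f(y)|\,dy \lesssim Mf(x)$, where $M$ is the Hardy--Littlewood maximal function. In the far region $|y|>2|x|$, one has $|x-y|\sim|y|$ and $|x|^{-s} \le 2^s |y|^{-s}$, giving $|x|^{-s}I_s f(x) \lesssim \int_{|y|>2|x|} |y|^{-s-N+s}|f(y)|\,dy \lesssim Mf(x)$ after dyadic summation (this is where the condition $s<N/q$ enters via the convergence of the resulting series after integrating in $x$). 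On the intermediate annulus $|x|/2\le|y|\le 2|x|$, one uses $|x|^{-s}\sim |y|^{-s}$ and bounds the resulting operator via a standard Schur test on dyadic annuli, again requiring $s<N/q$.

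Combining these three estimates yields $\bigl| |x|^{-s}I_s f(x)\bigr| \lesssim Mf(x)$ (modulo an additional annular contribution bounded by the same dyadic argument), and the conclusion follows from the $L^q$-boundedness of $M$ for $q>1$. Substituting $f=(-\Delta)^{s/2}u$ gives the general fractional Hardy inequality, and the particular case $s=1$, $1<q<N$ (i.e.\ \eqref{Hardy-Lorentz}) is immediate since $\|\nabla u\|_{L^q}\sim \|u\|_{\dot W^{1,q}}$.

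The main obstacle is the intermediate/far-region analysis: naively $|x|^{-s}\chi_{|y|>2|x|}|y|^{-(N-s)}$ is \emph{not} a Calderón--Zygmund kernel, so one cannot invoke classical singular integral theory. One must either exploit that $|x|^{-sq}$ is a Muckenhoupt $A_q$ weight precisely when $0<s<N/q$ (and then appeal to weighted boundedness of $I_s$), or carry out the dyadic summation carefully with explicit exponents to verify the constants remain finite exactly under the hypothesis $s<N/q$. Either route confirms the sharp range and completes the proof.
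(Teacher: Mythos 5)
First, note that the paper does not prove this lemma at all: it is imported verbatim as \cite[Theorem 3.1]{Haj}, so there is no in-paper argument to compare against. Your overall strategy --- write $u=c_{N,s}I_s\bigl((-\Delta)^{s/2}u\bigr)$ and reduce the fractional Hardy inequality to the diagonal Stein--Weiss bound $\bigl\||x|^{-s}I_sf\bigr\|_{L^q}\lesssim\|f\|_{L^q}$ for $0<s<N/q$ --- is the classical route and is sound in principle, as is the near-origin region ($|y|<|x|/2$, where $|x-y|\sim|x|$ gives a genuine pointwise domination by $Mf$) and the intermediate annulus (dyadic shells in $|x-y|\sim 2^{-k}|x|$, summable because $s>0$).

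The far region is where your argument breaks. For $|y|>2|x|$ you assert $|x|^{-s}\le 2^s|y|^{-s}$; the inequality goes the other way ($|x|<|y|/2$ makes $|x|^{-s}$ \emph{larger} than $2^s|y|^{-s}$), and in fact no pointwise bound $|x|^{-s}\int_{|y|>2|x|}|y|^{-(N-s)}|f(y)|\,dy\lesssim Mf(x)$ can hold: taking $f$ concentrated on $|y|\sim R$ gives a left-hand side of order $|x|^{-s}R^{-(N-s)}\|f\|_{L^1}$ versus $Mf(x)\sim R^{-N}\|f\|_{L^1}$, a ratio $(R/|x|)^s\to\infty$ as $|x|\to0$. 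So the concluding claim $\bigl||x|^{-s}I_sf(x)\bigr|\lesssim Mf(x)$ is false, and the $L^q$ bound for this piece must be obtained at the integrated level. The correct completion is a Schur test with power-weight test functions $|x|^{-N/q}$, $|y|^{-N/q'}$ applied to the $(-N)$-homogeneous kernel $K(x,y)=|x|^{-s}|y|^{-(N-s)}\chi_{\{|y|>2|x|\}}$: one checks $\int K(x,y)|y|^{-N/q}\,dy\sim|x|^{-N/q}$ precisely when $N-s+N/q>N$, i.e.\ $s<N/q$, which is exactly where the hypothesis is used. (Your fallback suggestion via $|x|^{-sq}\in A_q$ does not directly repair this either, since $I_s$ is not a Calder\'on--Zygmund operator and one-weight $A_q$ theory does not give $\|I_sf\|_{L^q(w)}\lesssim\|f\|_{L^q(w)}$; the clean fix is the Schur/Hardy-operator computation above, or simply citing Stein--Weiss.) With the far region handled this way, the reduction to $s=1$, $1<q<N$ for \eqref{Hardy-Lorentz} is immediate as you say.
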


As a direct application of Lemma \ref{FHI}, the following lemma provides useful estimates for the nonlinear term in \eqref{DINLS}.
\begin{lem}\label{NLE}(\cite[Lemma 3.2]{Kim})
    Let $N \ge 3, \ 0< b < \frac{4}{N}, \ p=\frac{N+2-2b}{N-2}, \ r=\frac{2N(N+2-2b)}{N^2-2b N +4}$ and $\sigma=\sigma(N):=\frac{2N}{N-2}$. Then the following inequalities hold true
    \begin{equation}\label{NLE1}
        \||x|^{-b} |u|^{p-1}u\|_{L^2(I;\dot{W}^{1,\sigma'}(\R^N))} \lesssim \|u\|^{p}_{L^{\alpha(r)}(I; \dot{W}^{1,r}(\R^N))},
    \end{equation}
    \begin{equation}\label{NLE2}
        \||x|^{-b} |u|^{p-1}v\|_{L^2(I; \dot{W}^{1,\sigma'}(\R^N))} \lesssim \|u\|^{p-1}_{L^{\alpha(r)}(I;\dot{W}^{1,r}(\R^N))} \|v\|_{L^{\alpha(r)}(I; W^{1,r}(\R^N))},
    \end{equation}
    where $I\subset \R$ is an interval and the pair $(\alpha(r),r)$ is admissible in the sense of \eqref{adm}.
\end{lem}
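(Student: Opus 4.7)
\textbf{Proof plan for Lemma \ref{NLE}.} I would reduce both estimates to the same spatial Hölder decomposition and close with Hölder in time. For \eqref{NLE1}, I would start from the Leibniz rule
\[
|\nabla(|x|^{-b}|u|^{p-1}u)| \lesssim |x|^{-b-1}|u|^{p} + |x|^{-b}|u|^{p-1}|\nabla u|,
\]
and introduce $q_{1}:=Nr/(N-r)$, the Sobolev conjugate of $r$; the hypothesis $0<b<4/N$ (together with the explicit formula for $r$) forces $r<N$, so $q_1$ is well defined and $\dot{W}^{1,r}(\R^N) \hookrightarrow L^{q_{1}}(\R^N)$. A direct computation with the explicit values of $r$ and $p$ confirms the Hölder balance
\[
\frac{1}{\sigma'} \;=\; \frac{b}{N} + \frac{p-1}{q_{1}} + \frac{1}{r}.
\]
At each time, applying Hölder in space with the weight $|x|^{-b}$ placed in the scale-invariant Lorentz norm $L^{N/b,\infty}$ -- equivalently, absorbing the singular factor $|x|^{-1}|u|$ via the Hardy inequality \eqref{Hardy-Lorentz} -- produces the pointwise-in-time bound $\|\nabla(|x|^{-b}|u|^{p-1}u)\|_{L^{\sigma'}_{x}} \lesssim \|u\|_{L^{q_{1}}_{x}}^{p-1}\|\nabla u\|_{L^{r}_{x}} \lesssim \|\nabla u\|_{L^{r}_{x}}^{p}$, where the last step uses the Sobolev embedding.

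For the time step, the admissibility relation $2/\alpha(r)=N(1/2-1/r)$ combined with $p=(N+2-2b)/(N-2)$ yields $p/\alpha(r)=1/2$, so Hölder in $t$ with exponent $\alpha(r)/p=2$ closes \eqref{NLE1}. Estimate \eqref{NLE2} follows by the same scheme applied to $\nabla(|x|^{-b}|u|^{p-1}v)$, which produces three families of terms (gradient landing on the weight, on $|u|^{p-1}$, or on $v$). Each is handled identically to the previous case, with one $u$-factor in the Hölder decomposition swapped for a $v$-factor; the piece where the gradient lands on $|u|^{p-1}$ forces a $|u|^{p-2}\nabla u$ factor, which is why the first norm on the right remains at the homogeneous level $\dot{W}^{1,r}$. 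The single remaining $v$ is placed in $L^{q_1}$ and dominated by the inhomogeneous norm $\|v\|_{W^{1,r}}$, which is why the $v$-norm on the right is not purely homogeneous.

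\textbf{Main obstacle.} The delicate step is the exponent bookkeeping around the singular weight $|x|^{-b}$: one must either perform Hölder in Lorentz spaces using $\||x|^{-b}\|_{L^{N/b,\infty}(\R^N)} \lesssim 1$, or trade one power of $|x|^{-1}$ for a derivative through Hardy before invoking Hölder. The restriction $0<b<4/N$ is precisely what keeps all intermediate exponents -- $q_{1}$, $r$, $\sigma'$, and $\alpha(r)$ -- within the non-endpoint ranges where Hardy, Sobolev, and Hölder are simultaneously available. A subsidiary technicality is that in high dimensions one may have $p\leq 2$, in which case the classical Leibniz rule on $|u|^{p-1}$ must be replaced by a fractional chain rule; this replacement does not alter any of the exponents used above.
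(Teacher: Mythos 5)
Your proposal is correct in substance but is genuinely more self-contained than what the paper does: the paper does not prove the fixed-time (spatial) estimate at all — it quotes \cite[Lemma 3.2]{Kim} for the bound $\||x|^{-b}|u|^{p-1}u\|_{\dot W^{1,\sigma'}}\lesssim\|u\|_{\dot W^{1,r}}^{p}$ and then performs exactly the Hölder-in-time step you describe, recording the identity $\tfrac{1}{\alpha(\sigma)'}=\tfrac12=\tfrac{p}{\alpha(r)}$ (your computation $\alpha(r)=2p$ is the same fact, and it checks out). So the time part of your argument coincides with the paper's, while the space part replaces a citation by an actual proof via Leibniz, Hölder with the weight in $L^{N/b,\infty}$, the Sobolev embedding $\dot W^{1,r}\hookrightarrow L^{q_1}$ with $q_1=\tfrac{Nr}{N-r}=\tfrac{2N(N+2-2b)}{(N-2)(N-2b)}$, and Hardy for the term where the derivative hits the weight. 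I verified your spatial Hölder balance $\tfrac{1}{\sigma'}=\tfrac{b}{N}+\tfrac{p-1}{q_1}+\tfrac1r$; it is an identity for the given $p$, $r$, $\sigma$. What your route buys is independence from \cite{Kim}; what the paper's route buys is brevity.

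One step of your sketch needs tightening. O'Neil's inequality with $|x|^{-b}\in L^{N/b,\infty}$ and $|u|^{p-1}\nabla u\in L^{m}=L^{m,m}$, where $\tfrac1m=\tfrac{1}{\sigma'}-\tfrac bN$ so that $m>\sigma'$, only places the product in $L^{\sigma',s}$ with $s\ge m>\sigma'$, which is strictly larger than $L^{\sigma'}$; the strong $L^{\sigma'}$ norm does not follow directly. To close, either (i) use the refined Sobolev embedding $\dot W^{1,r}\hookrightarrow L^{q_1,r}$, which yields $|u|^{p-1}\nabla u\in L^{m,\,r/p}$ and suffices because $p/r\ge 1/\sigma'$ — an inequality that is \emph{equivalent} to $b\le 4/N$, so this, rather than $r<N$ (which only needs $2b<N$), is where the hypothesis actually bites; or (ii) split $|x|^{-b}$ into the regions $|x|\le1$ and $|x|\ge1$ and run ordinary Lebesgue Hölder with perturbed exponents. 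Either repair is routine, and you essentially anticipated the issue in your ``main obstacle'' paragraph; your Hardy fallback for the $|x|^{-b-1}$ term and your caveat about the fractional chain rule when $p<2$ (i.e.\ $N>6-2b$) are both appropriate. With that single adjustment the proposal is a complete proof of the lemma rather than a reduction to \cite{Kim}.
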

\begin{rem}
{\rm The proof of Lemma \ref{NLE} is based on the combination of \cite[Lemma 3.2]{Kim}, H\"older's inequality and the fact that $\frac{1}{\alpha(\sigma)'}=\frac{1}{2}=\frac{p}{\alpha(r)}$.}
\end{rem}

The following lemma will be essential to establish the completeness of a metric space that will be subsequently employed in our fixed point argument.
\begin{lem}
    \label{Complete}
    Let $1 < q, r < \infty$, let $I \subset \mathbb{R}$ be an interval, and let $M > 0$. Define the space
$$
\mathbf{Y} := \big\{ u \in L^q(I; W^{1,r}(\mathbb{R}^N)); \;\; \|u\|_{L^q(I; W^{1,r})} \leq M \big\},
$$
equipped with the metric
$$
d(u,v) := \|u - v\|_{L^q(I; L^r)}.
$$
Then $(\mathbf{Y}, d)$ forms a complete metric space.
\end{lem}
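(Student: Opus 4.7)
The plan is to establish completeness by combining the (standard) completeness of $L^q(I;L^r)$ with a weak-compactness argument in the reflexive space $L^q(I;W^{1,r})$.

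First, I would take a Cauchy sequence $(u_n)_{n \ge 1}$ in $(\mathbf{Y}, d)$. By definition, $(u_n)$ is Cauchy in $L^q(I; L^r(\R^N))$. Since $1 < q, r < \infty$, the Bochner space $L^q(I; L^r(\R^N))$ is a Banach space, so there exists a limit $u \in L^q(I; L^r(\R^N))$ such that
\begin{equation*}
\|u_n - u\|_{L^q(I; L^r)} \longrightarrow 0 \quad \text{as} \quad n \to \infty.
\end{equation*}
It remains to show that $u \in \mathbf{Y}$, that is, $u$ admits a weak gradient in $L^q(I; L^r(\R^N))$ and satisfies $\|u\|_{L^q(I; W^{1,r})} \le M$.

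Second, since $\|u_n\|_{L^q(I; W^{1,r})} \le M$ for every $n$, the sequence $(u_n)$ is bounded in the reflexive Banach space $L^q(I; W^{1,r}(\R^N))$ (reflexivity follows from $1<q,r<\infty$ together with the reflexivity of $W^{1,r}(\R^N)$). By the Banach--Alaoglu theorem, there exist a subsequence, still denoted $(u_n)$, and an element $v \in L^q(I; W^{1,r}(\R^N))$ such that $u_n \rightharpoonup v$ weakly in $L^q(I; W^{1,r}(\R^N))$. In particular, $u_n \rightharpoonup v$ weakly in $L^q(I; L^r(\R^N))$. Combining this with the strong convergence $u_n \to u$ in $L^q(I; L^r(\R^N))$ obtained above, and the uniqueness of weak limits, we conclude $u = v$ almost everywhere, so $u \in L^q(I; W^{1,r}(\R^N))$.

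Third, I would apply the weak lower semicontinuity of the norm in $L^q(I; W^{1,r}(\R^N))$:
\begin{equation*}
\|u\|_{L^q(I; W^{1,r})} \le \liminf_{n \to \infty} \|u_n\|_{L^q(I; W^{1,r})} \le M.
\end{equation*}
Hence $u \in \mathbf{Y}$, and we have already shown that $u_n \to u$ in the metric $d$. This proves completeness.

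The only potentially delicate point is the identification of the weak limit $v$ with the strong limit $u$; this is handled cleanly by the uniqueness of weak limits in $L^q(I; L^r(\R^N))$, so no real obstacle arises. All other ingredients (completeness of Bochner $L^q$-spaces, reflexivity of $W^{1,r}$ for $1<r<\infty$, weak lower semicontinuity of the norm) are standard functional-analytic facts, which keeps the argument short.
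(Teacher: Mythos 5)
Your proof is correct and follows essentially the same route as the paper: completeness of $L^q(I;L^r)$ to produce the strong limit, weak sequential compactness of the bounded sequence in the reflexive space $L^q(I;W^{1,r})$, weak lower semicontinuity of the norm, and identification of the weak and strong limits. The only cosmetic difference is that the paper identifies the two limits via uniqueness of distributional limits, while you identify them via uniqueness of weak limits in $L^q(I;L^r)$ (using that the embedding $L^q(I;W^{1,r})\hookrightarrow L^q(I;L^r)$ is weak--weak continuous); both are valid.
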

\begin{proof}[{Proof of Lemma \ref{Complete}}]
    Let $(u_n)_{n\in\mathbb{N}}$ be a Cauchy sequence in $(\mathbf{Y}, d)$. Since $L^q(I; L^r(\mathbb{R}^N))$ is a complete metric space with respect to the standard Lebesgue norm, there exists $u \in L^q(I; L^r(\mathbb{R}^N))$ such that 
\begin{equation}
    \|u_n - u\|_{L^q(I; L^r)} \to 0 \quad \text{as } n\to\infty.
\end{equation}
To establish the completeness of $(\mathbf{Y}, d)$, it suffices to show that $u\in \mathbf{Y}$.\\
First, observe that the strong convergence in Lebesgue spaces implies the convergence in the sense of distributions:
\begin{equation}
    \label{eq:conv_dist_u}
    u_n\to u \quad \text{in } \mathcal{D}'(I\times\mathbb{R}^N).
\end{equation}
Since $L^q(I; W^{1,r})$ is a reflexive Banach space (due to the fact $1<q,r<\infty$) and $(u_n)$ is bounded in this space by definition of $\mathbf{Y}$, there exists (by weak sequential compactness) a subsequence $(u_{n_k})_{k\in\mathbb{N}}$ that converges weakly to some $v\in L^q(I; W^{1,r})$. Moreover, by the lower semicontinuity of the norm under weak convergence \cite[Proposition 3.5 (iii), p. 58]{Brezis}, we have:
\begin{equation}
    \label{eq:bound_v}
    \|v\|_{L^q(I; W^{1,r})} \leq \liminf_{k\to\infty} \|u_{n_k}\|_{L^q(I; W^{1,r})} \leq M.
\end{equation}
The weak convergence in $L^q(I; W^{1,r}(\mathbb{R}^N))$ implies convergence in $\mathcal{D}'(I\times\mathbb{R}^N)$:
\begin{equation}
    \label{eq:conv_dist_v}
    u_{n_k} \to v \quad \text{in } \mathcal{D}'(I\times\mathbb{R}^N).
\end{equation}
From \eqref{eq:conv_dist_u} and \eqref{eq:conv_dist_v}, we deduce by uniqueness of distributional limits that $u = v$. Therefore, we conclude that $u \in L^q(I; W^{1,r})$ (inherited from $v$) with 
$\|u\|_{L^q(I; W^{1,r})} \leq M$ by \eqref{eq:bound_v}, 
which establishes that $u \in \mathbf{Y}$. This completes the proof of Lemma~\ref{Complete}.
\end{proof}
\section{Mass-sub-critical regime}\label{mass}
In this section, we prove Theorem \ref{scat1}.

\begin{proof}[Proof of Theorem \ref{scat1}]
We begin by defining the Hamiltonian for $v$:
\begin{equation}\label{Hamil-v}
    \mathbf{H}(v(t)) := \|\nabla v(t)\|^2  +\lambda \int_{\R^N} \frac{|v(t,x)|^2}{|x|^2} \,dx - \frac{2}{p+1}e^{-(p-1)\baa(t)}\int_{\R^N} |x|^{-b}|v(t,x)|^{p+1}\,dx.
\end{equation}
Through direct computation, we obtain its time derivative:
\begin{equation}\label{Hamil-v-dt}
    \frac{d}{dt}\mathbf{H}(v(t)) = \frac{2(p-1)}{p+1}\ba(t)e^{-(p-1)\baa(t)}\int_{\R^N}|x|^{-b} |v(t,x)|^{p+1}\,dx.
\end{equation}
Integration over time yields the identity:
\begin{equation}\label{Hamil-v-id}
    \mathbf{H}(v(t)) = \mathbf{E}(u_0) + \frac{2(p-1)}{p+1}\int_0^t \ba(s)e^{-(p-1)\baa(s)}\left(\int_{\R^N} |x|^{-b}|v(s,x)|^{p+1}\,dx\right)ds.
\end{equation}
Similarly, we define the Hamiltonian for $u$ as:
\begin{equation}\label{tildeH}
    \mathcal{H}(u(t)) := e^{2\baa(t)}\mathbf{E}(u(t)) - \frac{2(p-1)}{p+1}\int_0^t \ba(s)e^{2\baa(s)}\left(\int_{\R^N} |x|^{-b}|u(s,x)|^{p+1}\,dx\right)\,ds.
\end{equation}
From \eqref{E-Id}, we immediately deduce the conservation property:
\begin{equation}\label{Ham-u}
    \frac{d}{dt}\mathcal{H}(u(t)) = 0.
\end{equation}
Combining \eqref{tildeH} and \eqref{Ham-u}, we obtain the energy identity:
\begin{equation}\label{H-Id}
    \mathbf{E}(u_0) = e^{2\baa(t)}\mathbf{E}(u(t)) - \frac{2(p-1)}{p+1}\int_0^t \ba(s) e^{2\baa(s)}\left(\int_{\R^N} |x|^{-b}|u(s,x)|^{p+1}\,dx\right)\,ds,
\end{equation}
where $\baa(t)$ and $\mathbf{E}(u(t))$ are defined in \eqref{A} and \eqref{E-u}, respectively. 

First, consider the mass-subcritical regime, that is $p<1+\frac{4-2b}{N}$.
Using the energy functional $\mathbf{E}(u(t))$ from \eqref{E-uu} together with \eqref{M-Id}, Hardy's inequality \eqref{Hardy} and the Gagliardo-Nirenberg inequality \eqref{ineq}, we obtain the lower bound:
\begin{equation}\label{LE-Energy'}
    \mathbf{E}(u(t)) \geq \|\sqrt{\mathcal{K}_\lambda} u(t)\|^2- \frac{2\mathtt{K}_{opt}}{1+p}\|u_0\|^{1+p-\nu}\|\sqrt{\mathcal{K}_\lambda} u(t)\|^{\nu},
\end{equation}
where the exponent $\nu$ is given by
\begin{equation}\label{nuu'}
    \nu = b + \frac{N(p-1)}{2}.
\end{equation}
Combining \eqref{H-Id} and \eqref{LE-Energy'}, we derive the following estimate:
    \begin{equation}
    \label{Pre-Gron'}
    \begin{split}
    \mathbf{E}(u_0) \geq &\ e^{2\baa(t)}\left(\|\sqrt{\mathcal{K}_\lambda} u(t)\|^2- \frac{2\mathtt{K}_{opt}}{1+p}\|u_0\|^{1+p-\nu}\|\sqrt{\mathcal{K}_\lambda} u(t)\|^{\nu}\right)\\
    &- \frac{2\mathtt{K}_{opt}(p-1)}{1+p}\|u_0\|^{1+p-\nu}\, \int_0^t \ba(s) e^{2\baa(s)} \|\sqrt{\mathcal{K}_\lambda} u(s)\|^\nu\,ds.
    \end{split}
    \end{equation}
We can rewrite \eqref{Pre-Gron'} in the form
\begin{equation}
\label{Gronw-new}
    f(t) \leq \mathbf{E}(u_0) + g(t)(f(t))^{\beta} + \int_0^t h(s)(f(s))^{\beta}\,ds, \quad \forall\,\,\; 0\leq t<T_{max},
\end{equation}
where we define:
\begin{align*}
    f(t) &= e^{2\baa(t)}\|\sqrt{\mathcal{K}_\lambda} u(t)\|^2, \\
    g(t) &= \frac{2\mathtt{K}_{opt}}{p+1}\|u_0\|^{p+1-\nu} e^{(2-\nu)\baa(t)}, \\
    h(t) &= \frac{2(p-1)\mathtt{K}_{opt}}{p+1}\|u_0\|^{p+1-\nu}\,\ba(t) e^{(2-\nu)\baa(t)}, \\
    \beta &= \frac{\nu}{2}=\frac{b}{2}+\frac{N(p-1)}{4}.
\end{align*}
For the range $p < 1 + \frac{4-2b}{N}$ (which implies that $\beta<1$), we verify that all conditions of \cite[Lemma 3.1]{MJM} are satisfied. Consequently, by norm equivalence \eqref{equiv-norm-id}, we obtain the uniform bound,
\begin{equation}
\label{GE-est}
    \sup_{0\leq t<T_{max}} e^{\baa(t)}\|\sqrt{\mathcal{K}_\lambda} u(t)\| < \infty.
\end{equation}
The above estimate together with the blow-up alternative yield the global existence, that is $T_{max}=\infty$. Furthermore, the estimate \eqref{GE-est} implies the global bound
\begin{equation}
\label{Scat-est}
   \|\sqrt{\mathcal{K}_\lambda}u(t)\|\lesssim e^{-\baa(t)}\lesssim e^{-\underline{\ba}t}, \quad \forall \ t\geq 0.
\end{equation}
Therefore, the scattering for $p < 1 + \frac{4-2b}{N}$ is obtained similarly as in \cite[Theorem 2.2]{MJM}.\\
Now, we consider the mass-critical case \( p = 1 + \frac{4-2b}{N} \). In this setting, we clearly have \( \nu = 2 \) and \( \mathtt{K}_{\text{opt}} = \frac{1+p}{2} \|\mathcal{Q}\|^{1-p} \). Thus, inequality \eqref{Pre-Gron'} becomes  
\begin{align}
    \mathbf{E}(u_0) 
    &\geq \left(1 - \left(\frac{\|u_0\|}{\|\sQ\|}\right)^{p-1}\right) \left(e^{\baa(t)}\|\sqrt{\mathcal{K}_\lambda} u(t)\|\right)^2 \nonumber \\
    &\quad - (p-1)\left(\frac{\|u_0\|}{\|\sQ\|}\right)^{p-1} \int_0^t \ba(s) \left(e^{\baa(s)}\|\sqrt{\mathcal{K}_\lambda} u(s)\|\right)^2 \, ds. \label{Pre-Gron''}
\end{align}  
Note that, by taking $t=0$ in the RHS of \eqref{Pre-Gron''} and using \eqref{Grd-ass}, one can easily see that $\mathbf{E}(u_0)\geq 0$. Now, recalling assumption \eqref{Grd-ass}, we obtain  
\begin{equation}
    \label{Pre-Gron3}
\left(e^{\baa(t)}\|\sqrt{\mathcal{K}_\lambda} u(t)\|\right)^2 
    \leq \frac{\mathbf{E}(u_0)}{1-\varrho} + \frac{(p-1)\varrho}{1-\varrho} \int_0^t \ba(s) \left(e^{\baa(s)}\|\sqrt{\mathcal{K}_\lambda} u(s)\|\right)^2 \, ds,
\end{equation}  
where \( \varrho := \left(\frac{\|u_0\|}{\|\mathcal{Q}\|}\right)^{p-1} \).  
Applying Gronwall's lemma to \eqref{Pre-Gron3} implies that  
\begin{equation}
    \label{Pre-Gron4}
    \|\sqrt{\mathcal{K}_\lambda} u(t)\|^2 
    \leq \frac{\mathbf{E}(u_0)}{1-\varrho} \, \exp\left(\left(\frac{(p-1)\varrho}{1-\varrho} - 2\right) \baa(t)\right).
\end{equation}  
Consequently, scattering holds provided  
\begin{equation}
    \label{cnd}
    \frac{(p-1)\varrho}{1-\varrho} < 2.
\end{equation}  
Indeed, \eqref{cnd} combined with the fact that $\underline{\ba}>0$ gives
\begin{equation}
    \label{Pre-Gron5}
    \|\sqrt{\mathcal{K}_\lambda} u(t)\|^2 
    \leq \frac{\mathbf{E}(u_0)}{1-\varrho} \, \exp\left(-\left(2-\frac{(p-1)\varrho}{1-\varrho}\right) \underline{\ba}t\right).
\end{equation}  
A direct computation reveals that \eqref{cnd} is equivalent to \eqref{Grd-ass}.
\end{proof}
\section{Inter-critical regime}\label{inter}
In this section, we establish the proof of Theorem \ref{t1} which is essentially based on the following proposition. 
\begin{prop}
    \label{prop3}
    Assume the hypotheses in Theorem \ref{t1} hold true. Let $\theta, r$ and $q$ as defined in \eqref{theta-r-q}. Suppose further that $\underline{\ba}>0$. Then, there exists $\varepsilon>0$ independent of $\ba(t)$ such that the solution of \eqref{DINLS} exists globally whenever 
    \begin{equation}
    \label{small-glob}
        \left\|U_{\ba,\lambda}(\cdot)u_0\right\|_{L^{\theta}(0,\infty; L^{r})} \le \varepsilon.
    \end{equation}
\end{prop}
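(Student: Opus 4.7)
The plan is to prove Proposition~\ref{prop3} by a Banach fixed-point argument applied to the Duhamel map
\[
\Psi[u](t) := U_{\ba,\lambda}(t)\,u_0 + i\,\Phi_{\ba,\lambda}\bigl[|\cdot|^{-b}|u|^{p-1}u\bigr](t),
\]
on the whole half-line $[0,\infty)$, with the hypothesis $\|U_{\ba,\lambda}(\cdot)u_0\|_{L^\theta(0,\infty;L^r)}\le \varepsilon$ supplying the smallness that drives the contraction. The natural workspace is the closed set
\[
\mathcal{B}_{M,\varepsilon} := \Big\{ u \in C(\R_+; H^1(\R^N)) : \|\sqrt{\ck}\,u\|_{L^q(L^r)} + \|u\|_{L^\infty(H^1)} \le M,\ \|u\|_{L^\theta(L^r)} \le 2\varepsilon \Big\},
\]
endowed with the distance $d(u,v):=\|u-v\|_{L^q(L^r)}+\|u-v\|_{L^\theta(L^r)}$, whose completeness follows from Lemma~\ref{Complete}. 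The radius $M$ will be chosen proportional to $\|u_0\|_{H^1}$ and $\varepsilon$ will be shrunk further if needed.

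First, I would observe that the Strichartz estimates of Lemmas~\ref{str}, \ref{str-s} and \ref{stri-radial} transfer from $U_\lambda$ to the damped propagator $U_{\ba,\lambda}$ and to $\Phi_{\ba,\lambda}$: from the factorization \eqref{U-a} and the pointwise bound $e^{-(\baa(t)-\baa(\tau))}\le 1$ valid for $0\le\tau\le t$ (since $\baa$ is nondecreasing on $\R_+$), every bound for $U_\lambda$ yields the same bound for the damped evolution, with the same constant. Consequently the linear term $U_{\ba,\lambda}(\cdot)u_0$ is controlled in $L^q(L^r)$, $L^\infty(L^2)$ and $L^\infty(\dot{H}^1_\lambda)$ by $C\|u_0\|_{H^1}$, while its $L^\theta(L^r)$ norm is bounded by $\varepsilon$ by hypothesis. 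The radial assumption and $\lambda\ge 0$ are crucial here to invoke Lemma~\ref{stri-radial} for the $s_c$-admissible pair $(\theta,r)$.

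Second, I would estimate the nonlinear contribution on $\mathcal{B}_{M,\varepsilon}$. Lemma~\ref{lem-4.4} (used in case $(i)$ for $N\ge 4$ and $(ii)$ for $N=3$) produces
\[
\bigl\|\sqrt{\ck}\bigl(|x|^{-b}|u|^{p-1}u\bigr)\bigr\|_{L^{q_*'}(L^{r_*'})} \lesssim M^{\,1+\eta}\,(2\varepsilon)^{p-1-\eta},
\]
and a parallel derivative-free estimate, obtained by Hölder together with \eqref{Hardy-Lorentz} and the dual exponent relations \eqref{p-theta}--\eqref{q-theta}, gives
\[
\bigl\||x|^{-b}|u|^{p-1}u\bigr\|_{L^{\tilde{\theta}'}(L^{r'})} \lesssim \|u\|_{L^\theta(L^r)}^{p} \lesssim (2\varepsilon)^p,
\]
which fed into Lemma~\ref{strichartz} (the Hardy--Littlewood--Sobolev-type inhomogeneous bound) controls the $L^\theta(L^r)$ norm of the Duhamel integral. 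Combining these pieces yields $\|\Psi[u]\|_{L^\theta(L^r)}\le \varepsilon + C\,\varepsilon^{p}$ together with analogous bounds on the other norms of $\mathcal{B}_{M,\varepsilon}$, so that choosing $M\simeq \|u_0\|_{H^1}$ and then $\varepsilon$ small enough (depending on $M$) makes $\Psi$ map $\mathcal{B}_{M,\varepsilon}$ into itself.

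The contraction property is obtained from the same machinery applied to the Lipschitz inequality $\bigl||u|^{p-1}u-|v|^{p-1}v\bigr|\lesssim \bigl(|u|^{p-1}+|v|^{p-1}\bigr)|u-v|$, yielding $d(\Psi[u],\Psi[v])\le C\,\varepsilon^{p-1-\eta}\,d(u,v)$, which is strictly contractive for $\varepsilon$ small. The unique fixed point is the desired global-in-time solution. The main obstacle I anticipate is the $L^\theta(L^r)$ control of the Duhamel operator: because $(\theta,r)$ is only $s_c$-admissible and not admissible in the sense of \eqref{adm}, no direct inhomogeneous Strichartz closes the loop, and one must route through the Hardy--Littlewood--Sobolev-type bound of Lemma~\ref{strichartz} with the conjugate exponent $\tilde{\theta}$ determined by \eqref{theta-thetatild}. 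Verifying that the scaling relations \eqref{p-theta}--\eqref{q-theta} are exactly compatible with the nonlinear Hölder splitting -- including the careful use of Hardy's inequality \eqref{Hardy-Lorentz} to absorb the singular weight $|x|^{-b}$ -- is the most delicate bookkeeping step.
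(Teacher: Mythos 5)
Your proposal is correct and follows essentially the same route as the paper: a Banach fixed-point argument on the same three-norm workspace with the same metric $d(u,v)=\|u-v\|_{L^q(L^r)}+\|u-v\|_{L^\theta(L^r)}$, using Lemma~\ref{lem-4.4} for the derivative nonlinear estimate and the $s_c$-admissible pair $(\theta,r)$ (with its dual $(-s_c)$-admissible counterpart) to close the $L^\theta(L^r)$ bound. The paper's own proof is only a sketch deferring to \cite[Theorem 1.8]{cg}; your write-up supplies the same details that reference contains.
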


\begin{proof}[{Proof of Proposition \ref{prop3}}]
Although we deal here with a time-dependent damping term, the proof of Proposition \ref{prop3} follows the same lines as in \cite[Theorem 1.8]{cg}. However, to facilitate the reading of the article, we will give the most important steps. First, recall the definition of $(q,r, \theta, \Tilde{\theta})$ as given by \eqref{theta-r-q} and \eqref{theta-thetatild}. Then, let us define
\begin{equation}
    \label{Phi-Duh}
    \Phi(u)(t)=U_{\ba,\lambda}(t)u_0 +\,i \int_0^t\,U_{\ba,\lambda}(t-\tau)\left[|\cdot|^{-b}|u(\tau)|^{p-1}u(\tau)\right]d\tau,
\end{equation}
\begin{equation}
    \label{X}
    \mathbf{X}=\left\{ u:\, \|u\|_{L^{\theta}(0,\infty; L^{r})} \le  2\varepsilon,\, \|u\|_{L^{\infty}(0,\infty; H^{1}_\lambda)} \le 2\textcolor{cornflowerblue}{C}\|u_0\|_{H^1},\  \|\sqrt{\mathcal{K}}_\lambda\|_{L^{q}(0,\infty; L^{r})}\le 2\textcolor{cornflowerblue}{C}\|u_0\|_{H^1}\right\},
\end{equation}
that we endow with the following distance
\begin{equation}
    \label{distance}
    d(u,v)= \|u-v\|_{L^{q}(0,\infty; L^{r})}+\|u-v\|_{L^{\theta}(0,\infty; L^{r})}.
\end{equation}
One can easily check that $(q,r)$ is admissible, $(\theta,r)$ is $s_c-$admissible and $(\tilde{\theta},r)$ is $(-s_c)-$admissible in the sense of Definition \ref{df}, where $s_c$ is defined by \eqref{s-c}. 

Arguing as in the proof of Theorem 1.8 in \cite{cg} and using Strichartz estimates together with Lemma \ref{lem-4.4}, the function $\Phi$ defines a contraction on the complete metric space $(\mathbf{X}, d)$ for $\varepsilon>0$ small enough.

The proof of Proposition \ref{prop3} is thus achieved via the Banach fixed-point theorem.
\end{proof}

Now, to conclude the proof of global existence in Theorem \ref{t1}, it remains to show that we can make $\left\|U_{\ba,\lambda}(\cdot)u_0\right\|_{L^{\theta}(0,\infty; L^{r})}$ small enough for large $\underline{\ba}$. To this end, we use the Sobolev embedding $H^1 \hookrightarrow L^r$ and \eqref{ais}, and we obtain
\begin{equation}
\label{Glo-epsi}
\begin{split}
\left\|U_{\ba,\lambda}(\cdot)u_0\right\|_{L^{\theta}(0,\infty; L^{r})}^\theta&=\int_0^\infty\,{\rm e}^{-\theta\baa(t)}\,\|{\rm e}^{-it\mathcal K_\lambda}u_0\|_{L^{r}}^{\theta}\,dt\\
&\lesssim \|u_0\|_{H^{1}}^{\theta}\,\int_0^\infty\, {\rm e}^{-\theta {\underline{\ba}} t}\,dt
\lesssim \frac{\|u_0\|_{H^{1}}^{\theta}}{\theta \,{\underline{\ba}}}.
\end{split}
\end{equation}
From \eqref{Glo-epsi} and Proposition \ref{prop3} we easily deduce that $T^*=\infty$ if $\underline{\ba}\gtrsim \|u_0\|_{H^{1}}^{\theta}$ ($\ba^*=C \|u_0\|_{H^{1}}^{\theta}$). 

This finishes the proof of small data global existence part of Theorem \ref{t1}. Furthermore, owing to the definition of the space $\mathbf{X}$, the global solution enjoys the bound  $\|u\|_{L^{\infty}(0,\infty; H^{1}_\lambda)} \le 2C_s\|u_0\|_{H^1}$, which yields the scattering part of Theorem \ref{t1}.
\section{Energy-critical regime}\label{crit}
This section focuses on proving Theorem \ref{LWP}. The core of our argument relies on a key auxiliary result, which we present in the following proposition.

\begin{prop}\label{LWP1}  
Let $N \geq 3$, $0 < b < \frac{4}{N}$, and $p =1+ \frac{4-2b}{N-2}$. Suppose that the parameter $\lambda$ satisfies  
$$  
\lambda > -\frac{(N+2-2b)^2-4}{(N+2-2b)^2}\lambda_N,  
$$  
where $\lambda_N$ is defined in \eqref{lambda_N}. For every initial data $u_0 \in H^1(\R^N)$, there exists  $\ba^* > 0$  such that if $\underline{\ba} \geq \ba^*$ (with $\underline{\ba}$ given by \eqref{ais}), the problem \eqref{DINLS-v} has a unique global solution $v$   such that
$$  
v \in C(\R_+; H^1(\R^N)) \cap L^{\alpha(r)}(\R_+; W^{1,r}(\R^N)),  
$$  
where $r = \frac{2N(N+2-2b)}{N^2-2Nb+4},$    
and the pair $(\alpha(r), r)$ satisfies the admissibility condition in \eqref{adm}. Furthermore, the solution satisfies the uniform bound  
\begin{equation}\label{est-local} 
\|v(t)\|_{H^1} \leq 2C\|u_0\|_{H^1}, \quad \text{for all \ } t \ge 0,  
\end{equation}
where $C > 0$ is a constant.  
\end{prop}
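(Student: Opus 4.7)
My plan is to establish the proposition through a standard contraction mapping argument applied to the Duhamel formulation of \eqref{DINLS-v}, namely
\[
\Phi(v)(t) := U_\lambda(t)u_0 - i\int_0^t U_\lambda(t-\tau)\, e^{-(p-1)\baa(\tau)} |x|^{-b}|v(\tau)|^{p-1}v(\tau)\, d\tau,
\]
on the ball
\[
\mathbf{Y} := \bigl\{v : \|v\|_{L^\infty(\R_+;H^1)} + \|v\|_{L^{\alpha(r)}(\R_+;W^{1,r})} \le 2\mathbf{c}_0 \|u_0\|_{H^1}\bigr\},
\]
endowed with the distance $d(v_1,v_2)=\|v_1-v_2\|_{L^{\alpha(r)}(\R_+;L^r)}$, whose completeness is guaranteed by Lemma~\ref{Complete}.

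First I would combine the Strichartz inequalities from Lemma~\ref{str} for the admissible pair $(\alpha(r),r)$ and the endpoint pair $(2,\sigma)$ with the dual pair $(2,\sigma')$, where $\sigma=2N/(N-2)$, to obtain
\[
\|\Phi(v)\|_{L^\infty(\R_+;H^1)\cap L^{\alpha(r)}(\R_+;W^{1,r})} \le \mathbf{c}_0\|u_0\|_{H^1} + \mathbf{c}_0 \mathcal N(v),
\]
with $\mathcal N(v) := \bigl\|e^{-(p-1)\baa(\cdot)}|x|^{-b}|v|^{p-1}v\bigr\|_{L^2(\R_+;W^{1,\sigma'})}$. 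The core of the argument is then the damping-enhanced nonlinear estimate
\[
\mathcal N(v) \;\le\; e^{-(p-1)\underline{\ba}}\,\|v\|_{L^{\alpha(r)}(\R_+;W^{1,r})}^p.
\]
I would derive this by introducing the rescaling $w(t):=e^{-(p-1)\baa(t)/p}v(t)$, which satisfies $|w|^{p-1}w = e^{-(p-1)\baa}|v|^{p-1}v$, applying Lemma~\ref{NLE} to $w$, and then controlling $\|w\|_{L^{\alpha(r)}(W^{1,r})}$ via a splitting at $t=1$: on $(1,\infty)$ the monotonicity bound $\baa(t)\ge\underline{\ba}t$ produces the factor $e^{-(p-1)\underline{\ba}}$ directly, while on $(0,1)$ one absorbs the short-time piece using the uniform $L^\infty(H^1)$-bound on $v$, Sobolev's embedding $H^1\hookrightarrow L^r$, and the range assumption on $\lambda$ that triggers the norm equivalence of Lemma~\ref{equiv-norm-leb}.

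Once this bound is in hand, the threshold $\underline{\ba}\ge\ba^{**}$ as in \eqref{GWP-cond} is equivalent to $(2\mathbf{c}_0)^p\|u_0\|_{H^1}^{p-1}e^{-(p-1)\underline{\ba}}\le 1$, which makes $\mathbf{c}_0 \mathcal N(v)\le \mathbf{c}_0\|u_0\|_{H^1}$ whenever $v\in\mathbf{Y}$, so that $\Phi(\mathbf{Y})\subset\mathbf{Y}$. A parallel computation for differences $\Phi(v_1)-\Phi(v_2)$, based on the pointwise inequality $\bigl||v_1|^{p-1}v_1-|v_2|^{p-1}v_2\bigr|\lesssim (|v_1|^{p-1}+|v_2|^{p-1})|v_1-v_2|$ together with Lemma~\ref{NLE2} and the same exponential gain, yields a strict contraction constant under the same threshold. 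Banach's fixed-point theorem then produces the unique global solution $v\in\mathbf{Y}$ satisfying \eqref{est-local}, and its uniqueness in the ambient Strichartz class follows standardly.

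The principal obstacle lies in the damping-enhanced nonlinear estimate. Because the energy-critical nonlinearity is scale invariant, the weight $e^{-(p-1)\baa(\tau)}$ cannot be extracted in $L^\infty$ to produce any smallness; routing it through Lemma~\ref{NLE} via the substitution $w=e^{-(p-1)\baa/p}v$ pushes the decay into the $L^{\alpha(r)}(W^{1,r})$-norm of $w$, but closing that norm requires a careful simultaneous interpolation between the uniform $L^\infty(H^1)$-bound (supplied by Sobolev), the $L^{\alpha(r)}(W^{1,r})$-bound (supplied by $\mathbf{Y}$) and the pointwise monotonicity $\baa(t)\ge \underline{\ba}t$, in order to recover exactly the exponential factor $e^{-(p-1)\underline{\ba}}$ that matches the threshold $\ba^{**}$ of Remark~\ref{Rem2.4}$(i)$.
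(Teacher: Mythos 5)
Your overall strategy coincides with the paper's: a Banach fixed-point argument for the Duhamel map of \eqref{DINLS-v} on a ball of radius $M=2C\|u_0\|_{H^1}$ in $L^{\alpha(r)}(\R_+;W^{1,r})$, equipped with the weaker metric $\|\cdot\|_{L^{\alpha(r)}(\R_+;L^r)}$ and made complete by Lemma~\ref{Complete}, with the self-map and contraction properties closed via Strichartz estimates, Lemma~\ref{NLE}, and the requirement $CM^{p-1}{\rm e}^{-(p-1)\underline{\ba}}\le\frac{1}{2}$, which is exactly how the paper defines the threshold. Up to the cosmetic difference that you also build the $L^\infty(\R_+;H^1)$ bound into the ball (the paper recovers $C(\R_+;H^1)$ a posteriori from $\Psi(v)=v$ and Lemma~\ref{NLE}), the architecture is identical.

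The genuine gap is in your ``damping-enhanced nonlinear estimate'' $\mathcal{N}(v)\le {\rm e}^{-(p-1)\underline{\ba}}\|v\|_{L^{\alpha(r)}(\R_+;W^{1,r})}^p$, which is the one step that must produce the smallness. After the substitution $w={\rm e}^{-(p-1)\baa/p}v$ and the splitting at $t=1$, the piece on $(1,\infty)$ does carry the factor ${\rm e}^{-(p-1)\underline{\ba}}$, but on $(0,1)$ the weight only satisfies ${\rm e}^{-(p-1)\baa(t)/p}\le 1$ (since $\baa(0)=0$ and $\baa(t)\ge\underline{\ba}t$ gives nothing uniform for small $t$), so the short-time contribution to $\|w\|_{L^{\alpha(r)}((0,1);\dot{W}^{1,r})}$ is comparable to $\|v\|_{L^{\alpha(r)}((0,1);\dot{W}^{1,r})}\le M$ and carries no exponential gain. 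Your proposed absorption of this piece fails on two counts: Sobolev's embedding $H^1\hookrightarrow L^r$ does not control the $\dot{W}^{1,r}$ norm that Lemma~\ref{NLE} requires (for $r>2$, $\|\nabla v\|_{L^r}$ is not bounded by $\|v\|_{H^1}$), and even if it did, the resulting term is of size $CM^p$ with no factor of ${\rm e}^{-(p-1)\underline{\ba}}$, so the self-map inequality degenerates to $\|\Phi(v)\|\le \frac{M}{2}+CM^p$, which cannot be closed for large $\|u_0\|_{H^1}$ no matter how large $\underline{\ba}$ is --- precisely the regime the proposition addresses. The paper does not take this detour: it keeps the weight attached to $v$ and bounds $\|{\rm e}^{-(p-1)\baa(t)}v\|_{L^{\alpha(r)}(\R_+;W^{1,r})}$ by ${\rm e}^{-(p-1)\underline{\ba}}\|v\|_{L^{\alpha(r)}(\R_+;W^{1,r})}$ in one stroke on all of $\R_+$ (the same short-time subtlety is implicit there and deserves care, since $\baa(t)\ge\underline{\ba}$ is only guaranteed for $t\ge1$). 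In any case, the particular patch you propose for $(0,1)$ does not yield the estimate you state, so the argument as written does not close.
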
  
\begin{proof}[Proof of Proposition \ref{LWP1}]
   We follow steps similar to those in the proof of \cite[Theorem 1.1]{Kim} with the necessary modifications.  For $M:=2 C \|u_0\|_{H^1}$, we define
    \begin{displaymath}
        \mathcal{X}:=\left\{v \in  L^{\alpha(r)}(\R_+; W^{1,r}): \|v\|_{L^{\alpha(r)}(\R_+;W^{1,r})} \le M \right\},
    \end{displaymath}
    and 
    \begin{equation}
    \label{Psi}
    \Psi(v)(t):=U_{\lambda}(t)u_0 + i \int_0^t\,e^{-(p-1)\baa(\tau)} U_{\lambda}(t-\tau)\left[|\cdot|^{-b}|v(\tau)|^{p-1}v(\tau)\right]d\tau.
\end{equation}

We emphasize that, by Lemma~\ref{Complete}, the function space $\mathcal{X}$ endowed with the metric
$$
d(v_1,v_2) := \|v_1 - v_2\|_{L^{\alpha(r)}(\mathbb{R}_+; L^r(\mathbb{R}^N))}
$$
forms a complete metric space. This completeness will be crucial for our subsequent fixed-point argument.\\

In the sequel, we will verify the conditions required in the application of the Banach fixed-point theorem. For that purpose, we show that $\Psi$ maps $\mathcal{X}$ to itself and it is a contraction for $\underline{\ba}$ large enough. 

Note that the function $\tau \mapsto e^{-(p-1)\baa(\tau)}$ is bounded for all $\tau \ge 0$. Hence, it will be ignored in the next estimates in the remainder  of this proof.

For $v \in \mathcal{X}$, by Strichartz estimate and Lemma \ref{NLE}, we obtain that

\begin{equation}\label{Contract}
\begin{split}
    \|\Psi(v)\|_{L^{\alpha(r)}(\R_+;W^{1,r})} &\le C \|u_0\|_{H^1} + C M^{p-1} \|e^{-(p-1)\baa(t)}v\|_{L^{\alpha(r)}(\R_+;W^{1,r})}\\
    &\le \frac{M}{2} + C M^{p-1}e^{-(p-1)\underline{\ba}} \|v\|_{L^{\alpha(r)}(\R_+;W^{1,r})}\\
     &\le \frac{M}{2} + C M^{p}e^{-(p-1)\underline{\ba}}.
    \end{split}
\end{equation}

If we choose $\ba^*>0$ such that $C M^{p-1}e^{-(p-1)\ba^*}\leq \frac{1}{2}$, then for  $\underline{\ba}\geq \ba^*$, we get by \eqref{Contract}, 

\begin{equation}\label{Contract2}
    \|\Psi(v)\|_{L^{\alpha(r)}(\R_+;W^{1,r})} \le \frac{M}{2} + \frac{M}{2}=M.
\end{equation}

Now, let $v_1, v_2 \in \mathcal{X}$. Again employing Strichartz estimate and Lemma \ref{NLE}, we end up with the following estimate for $\underline{\ba}\geq \ba^*$:

\begin{equation}\label{Contract1}
\begin{split}
    \|\Psi(v_1)-\Psi(v_2)\|_{L^{\alpha(r)}(\R_+;L^{r})} 
    &\le  C  e^{-(p-1)\underline{\ba}}\left\||x|^{-b}\left( |v_1|^{p-1}v_1-|v_2|^{p-1}v_2\right)\right\|_{L^{\alpha(\sigma)'}(\R_+;L^{\sigma'})}\\
    &\le  C e^{-(p-1)\underline{\ba}}\left( \|v_1\|^{p-1}_{L^{\alpha(r)}(\R_+;\dot{W}^{1,r})} +\|v_2\|^{p-1}_{L^{\alpha(r)}(\R_+;\dot{W}^{1,r})}\right) \|v_1-v_2\|_{L^{\alpha(r)}(\R_+;L^{r})}\\
    &\le  C e^{-(p-1)\underline{\ba}}M^{p-1} \|v_1-v_2\|_{L^{\alpha(r)}(\R_+;L^{r})}\\
    & \le \frac{1}{2}  \|v_1-v_2\|_{L^{\alpha(r)}(\R_+;L^{r})}.
    \end{split}
\end{equation}
This guarantees the existence of a unique fixed point $v \in \mathcal{X}$ for the mapping $\Psi$, that is, $\Psi(v) = v$.

The regularity $v \in C(\mathbb{R}_+; H^1)$ follows directly from the identity $\Psi(v) = v \in \mathcal{X}$ and the application of Lemma \ref{NLE}. To establish the estimate \eqref{est-local}, we combine the Strichartz estimates, the definition of the space $\mathcal{X}$, the relation $M = 2C\|u_0\|_{H^1}$, and arguments similar to those used in the proof of \eqref{Contract}.

This completes the proof of Proposition \ref{LWP1}.

\end{proof}

\begin{rem}
\rm
The damping term plays a crucial role here. Indeed, unlike the undamped case, which requires restricting the time interval to ensure contraction for local existence, our large damping assumption directly enables the fixed-point argument without shortening the existence time. This gives rise to global existence directly in the damped case.
\end{rem}
\begin{proof}[Proof of Theorem \ref{LWP}]
Let $v$ be the global solution of \eqref{DINLS-v} given by Proposition \ref{LWP1}.  The Duhamel formula for \eqref{DINLS-v} yields
\begin{equation}
    \label{Duh-v}
    v(t)=U_{\lambda}(t)v(t_0) + i \int_{t_0}^t\,e^{-(p-1)\baa(\tau)} U_{\lambda}(t-\tau)\left[|\cdot|^{-b}|v(\tau)|^{p-1}v(\tau)\right]d\tau,
\end{equation}
where $t_0>0$ is arbitrary chosen. Let $T>t_0$. Using Strichartz inequality, Lemma \ref{NLE}, \eqref{est-local} and \eqref{ais}, we infer that
\begin{equation}
    \label{boots-est-1}
  \|v\|_{L^{\alpha(r)}(t_0, T;\ W^{1,r})} \leq C\|u_0\|_{H^1}+C {\rm e}^{-(p-1)\underline{\ba}t_0}\,\|v\|_{L^{\alpha(r)}(t_0, T;\ W^{1,r})}^{p}.
\end{equation}
Applying Lemma \ref{boots}, we obtain that 
\begin{equation}
    \label{boots-est-2}
    \|v\|_{L^{\alpha(r)}(t_0, T;\ W^{1,r})} \lesssim \|u_0\|_{H^1}.
\end{equation}
Letting $T\to \infty$ in \eqref{boots-est-2}, we get
\begin{equation}
    \label{boots-est-3}
    \|v\|_{L^{\alpha(r)}(t_0, \infty;\ W^{1,r})} \lesssim \|u_0\|_{H^1}.
\end{equation}
From \eqref{boots-est-3}, the relation  $v(t,x)=e^{\baa(t)}u(t,x)$ and \eqref{ais}, we have
\begin{equation}
    \label{boots-est-4}
    \|u\|_{L^{\alpha(r)}(t_0, \infty;\ W^{1,r}(\R^N))} \lesssim {\rm e}^{-\underline{\ba}t_0}\,\|u_0\|_{H^1}.
\end{equation}
This yields the scattering for \eqref{DINLS}, and hence the proof of Theorem \ref{LWP} is complete.
\end{proof}

\section{Conclusion and open questions}\label{cr}
The present work offers a new avenue in the study of the damped INLS equation with an inverse-square potential, advancing our understanding of nonlinear dispersive equations influenced by a damping term. Global existence of solutions is established for various regimes, where the damping term plays a key role in preventing blow-up. Scattering is shown to occur under sufficiently large damping, highlighting the dissipation effect. The analysis combines refined Strichartz estimates, Hardy-type inequalities, and a  fixed-point argument. One of the key differences in this setting is that the standard ground state argument fails due to the presence of time-dependent damping. 

Despite these advances, several important questions remain open including, but not limited to,  optimizing the damping threshold, extending the results to non-radial data,  opening to other singular (repulsive) potentials and investigating generalizations to fractional operators.


\vspace{1cm}

\hrule 

\vspace{0.3cm}
\noindent{\bf\large Declarations.} {\em On behalf of all authors, the corresponding author states that there is no conflict of interest. No data-sets were generated or analyzed during the current study.}

	\vspace{0.7cm}

 \hrule 


\end{document}